\newcommand{\ds}{\displaystyle}
\newcommand{\R}{\mathbb{R}}
    \newcommand{\tensorTwo}{}
\newcommand{\K}{\mathcal{K}}
\title{Parallel Matrix-free polynomial preconditioners \\ with application to flow simulations \\ in discrete fracture networks}
\date{}
\author{L. Bergamaschi, M. Ferronato, G. Isotton, C. Janna and A. Mart\'{\i}nez}
\begin{document}
\maketitle
\makeatletter
\@date
\makeatother

	\begin{abstract}
		We develop a robust matrix-free, communication avoiding parallel, high-degree polynomial preconditioner for the Conjugate Gradient method
		for large and sparse symmetric positive definite linear systems.
                We discuss the selection of a scaling parameter aimed at avoiding unwanted clustering of eigenvalues of the preconditioned
		matrices at the extrema of the spectrum.
		We use this preconditioned framework to solve a $3 \times 3$ block system arising in the simulation of fluid flow in large-size discrete
		fractured networks. We apply our polynomial preconditioner to a suitable
		Schur complement related with this system, which can not be explicitly  computed
		because of its size and density. 
		Numerical results confirm the excellent properties of the proposed preconditioner up to very high polynomial
		degrees. The parallel implementation achieves satisfactory
		scalability by taking advantage from the reduced number of scalar products and hence
		of global communications.

	\end{abstract}
	\begin{keywords}
	polynomial preconditioner, Conjugate Gradient method, parallel computing, scalability   
	\end{keywords}


	\section{Introduction}

Discretized PDEs and constrained as well as unconstrained optimization problems
often require the repeated solution of 
large and sparse linear systems $\ds A \fx = \fb$, in which $A$ is symmetric positive definite (SPD).
For practical scientific and engineering applications, the use of parallel computers
is mandatory, due to the large size and resolution of the considered models.
The size of these systems can be of order $10^6 \div 10^9$ and this calls for the use of
iterative methods, equipped with ad-hoc preconditioners as accelerators.

When the problem size grows up to several millions of unknowns,
it is not possible to store the system matrix nor the preconditioner on a single
machine. Furthermore, it is necessary to take advantage of several distributed resources
to reduce simulation time and, ultimately, the time to market.
Also, in many cases the huge size of the matrices can prevent their complete storage. In these 
instances only the application of the matrix to a vector is available as a routine (\textit{matrix
-free regime}).
Differently from direct factorization methods, 
iterative methods do not need the explicit knowledge of the coefficient matrix, however
they need to be suitably preconditioned to produce convergence in a reasonable CPU time. The issue is the
construction of a preconditioner  $P \approx A^{-1}$ which also works in a matrix-free regime.
The most common (general-purpose) preconditioners, such as the incomplete Cholesky factorization or most of approximate 
inverse preconditioners, rely on the knowledge of the coefficients of the matrix. An exception
	is represented by the AINV preconditioner (\cite{MR1787297}), whose construction is however
	inherently sequential. In all cases 
	 factorization based methods are not easily parallelizable, the bottleneck being
	the solution of triangular systems needed when these preconditioners are applied to a vector inside a Krylov subspace based solver.

	In this paper we are concerned with the effective development of 
	polynomial preconditioners, i.e. preconditioners that can be expressed as $P \equiv p_k(A)$.
Polynomial preconditioners are almost ideal candidates to be used as matrix-free parallel preconditioners, since, both
in set-up and application, they rely solely on operations, such as the sparse matrix
by vector product (SpMV), that are generally provided by highly efficient parallel linear
algebra libraries such as PETSci~\cite{petsc-web-page}, Hypre~\cite{FalYan02}, etc.
For instance, the application of $p_k(A)$ requires $k$ matrix-vector products, without needing
                      the explicit knowledge of the coefficients of matrix $A$.
	Moreover,
		their virtual  construction requires only the computation of the coefficients of the polynomials,
			with negligible computational cost, and
		the eigenvectors of the preconditioned matrix are the same as those
			of $A$. This feature can help accelerating the effect of the polynomial preconditioners
			by low-rank updates, which take advantage from the (approximate) knowledge of the eigenvectors
			of  $P A$.

The use of polynomial preconditioners for accelerating Krylov subspace methods is not new.
We quote for instance the initial works in \cite{MR694525,doi:10.1137/0906059} 
and \cite{VANGIJZEN199591,LiuMorganWilcox2015} where polynomial preconditioners are used
		to accelerate the Conjugate Gradient and the GMRES \cite{SaSc86} methods, respectively.
However, these ideas have been recently resumed, mainly in the context
of nonsymmetric linear systems, e.g. in  \cite{loe2019new,LTB_SIAMPP}
	or in the  acceleration of the Arnoldi method for eigenproblems \cite{LoeMorgan}.
	An interesting contribution to this subject is \cite{kaporin} where Chebyshev-based polynomial
	preconditioners are applied in conjunction with sparse approximate inverses.

	In this paper, starting from the work in \cite{CMM}, we develop a modified Newton-Chebyshev polynomial preconditioner
	for SPD systems, based on the choice of a parameter aimed at avoiding clustering of eigenvalues around the
	extrema of the spectrum. A theoretical analysis drives the choice of this parameter.	
	This matrix free preconditioner is employed in the solution of 
	the discrete problem
	arising from 
    flow simulations in discrete fracture network (DFN) models.
DFN models represent only the fractures as intersecting planar polygons, neglecting the surrounding
underground rock formation. The explicit representation of the fractures and their properties in a 
fully 3D structure requires the prescription of
continuity constraints for the fluid flow along the linear intersections. The number of
the fractures and their different size, that can change of orders of magnitude, entail a complex
and multi-scale geometry, which is not trivial to address. The problem has been effectively
reformulated as a PDE-constrained optimization problem in \cite{Berrone-et-alA,Berrone19}. The formulation relies on
the use of non-conforming discretizations of the single fractures and on the minimization of a
functional to couple intersecting planes, with no match between the meshes of the fractures
and the traces. The problem, often characterized by a huge size, can be algebraically reduced to the solution
of a sequence of SPD systems, whose matrix, however, cannot be computed and stored
explicitly. Nevertheless, the granular nature of the problem, which can be inherently subdivided in
several local problems on the fractures with a moderate
exchange of data, is particularly suitable for a massive parallel implementation.

In this work we will consider the Preconditioned Conjugate Gradient (PCG) method as iterative
solver, accelerated by the modified Newton-Chebyshev polynomial preconditioner.	 
For the parallel implementation, we rely on the Chronos library~\cite{CHRONOS-webpage,IsoFriSpiJan21}, a linear algebra
package specifically designed for high performance computing. 
Chronos takes advantage of fine-grained parallelism through the use of openMP
directives allowing for the use of multiple threads on the same MPI rank. 
Thanks also to the reduction of global
communication required by the repeated scalar products in PCG, the parallel implementation
of polynomial preconditioning turns out to be highly efficient, as will be shown in the
numerical experiments.

The rest of the paper is organized as follows: in Section 2 we briefly review the Newton-Chebyshev polynomial
	preconditioner and develop a strategy to avoid unpleasant clustering of eigenvalues around the endpoints
	of the spectrum. In Section 3 we show how to use our polynomial preconditioner in combination
	with other accelerators. In Section 4 we describe the test case arising from the DFN application, as 
	well as its algebraic formulation after finite element discretization and reduction to an SPD linear system.
	In Section  5 we describe our parallel implementation, while Section 6 collect the numerical results
	of the testing. Section 7 provides some concluding remarks.

 \section{Polynomial preconditioners}
 \label{sec:polprec}
 We briefly review two alternative formulations of the optimal polynomial preconditioners
 for the Conjugate Gradient method for symmetric positive definite linear systems, following the work in
 \cite{CMM}.   The connection between an accelerated Newton method for the matrix equation $X^{-1} = A$ and the Chebyshev 
 polynomials has been first established in \cite{PanPol} to develop a formula for matrix inversion.

 \subsection{Newton-based preconditioners}
\label{Newton}
The Newton preconditioner can be obtained as a trivial application of the Newton-Raphson method to the scalar equation
\[ x^{-1} - a = 0, \quad a \ne 0,\]
which reads 
\[ x_{j+1} = 2 x_j - a x_j^2, \quad j =0,\ldots, \qquad x_0 \ \text{fixed}.\]
The matrix counterpart of this method applied to $P^{-1} - A = 0$ can be cast as
\begin{equation}
	\label{newtonP}
 P_{j+1} = 2 P_j - P_j A P_j, \quad j = 0, \ldots, \qquad P_0 \text{ fixed}, 
\end{equation}
which is a well-known iterative method for matrix inversion (also known as Hotelling's method
\cite{hotelling1943}).

The efficiency of such a Newton method can however be increased
due to the following result, whose elementary proof is in \cite{CMM}:
\begin{theorem}
	\label{newtTh}
	Let $\alpha_j, \beta_j$ be the smallest and the largest eigenvalues of $P_j A$.

	If $0 < \alpha_j <  1  < \beta_j \le 2 - \alpha_j$ then $[\alpha_{j+1}, \beta_{j+1}]
	\subset [2 \alpha_j - \alpha_j^2, 1]$.
\end{theorem}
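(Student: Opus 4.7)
The plan is to reduce the matrix statement to a scalar question about the quadratic $f(\lambda)=2\lambda-\lambda^2$ and then exploit its symmetry. First I would observe that the Newton update can be right-multiplied by $A$ to give $P_{j+1}A = 2(P_jA) - (P_jA)^2$, so $P_{j+1}A = f(P_jA)$. Since $P_jA$ is similar to the symmetric matrix $A^{1/2}P_jA^{1/2}$ (assuming $P_j$ SPD, which is preserved by the recursion when $P_0$ is), it admits a spectral decomposition, and the eigenvalues of $P_{j+1}A$ are exactly $f(\mu)$ as $\mu$ runs over the eigenvalues of $P_jA$. Consequently
\[
[\alpha_{j+1},\beta_{j+1}] \subset f\bigl([\alpha_j,\beta_j]\bigr),
\]
and the theorem reduces to bounding the image of the interval under $f$.

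Next I would analyze $f$ by writing $f(\lambda)=1-(1-\lambda)^2$. This is a downward parabola with vertex $(1,1)$, symmetric about $\lambda=1$. The upper bound $\beta_{j+1}\le 1$ is then immediate, because $f(\lambda)\le 1$ for every real $\lambda$, and in particular for every $\mu\in[\alpha_j,\beta_j]$.

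For the lower bound I would use the symmetry together with the hypothesis $\beta_j\le 2-\alpha_j$. On any interval that contains the vertex $\lambda=1$, the minimum of $f$ is attained at whichever endpoint is farther from $1$. The assumption $1<\beta_j\le 2-\alpha_j$ is exactly $\beta_j-1\le 1-\alpha_j$, i.e.\ $\alpha_j$ is at least as far from $1$ as $\beta_j$ is. Hence
\[
\min_{\mu\in[\alpha_j,\beta_j]} f(\mu) = f(\alpha_j) = 2\alpha_j-\alpha_j^2,
\]
which gives $\alpha_{j+1}\ge 2\alpha_j-\alpha_j^2$ and closes the inclusion. The only subtle point, and the one I would be most careful about, is the justification that the spectrum of $P_jA$ actually lies in $[\alpha_j,\beta_j]\subset(0,2]$ throughout the iteration (so that the ``farther endpoint'' reasoning really selects $\alpha_j$); this hinges on the hypothesis $\beta_j\le 2-\alpha_j<2$ and on SPD-preservation of the update, but both follow directly from the stated assumptions and from the inclusion just proved, which in fact keeps $\beta_{j+1}\le 1<2-\alpha_{j+1}$ so the hypothesis is self-sustaining for the next step.
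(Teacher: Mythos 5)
Your proof is correct, and the spectral-mapping reduction to $f(t)=2t-t^2=1-(1-t)^2$, together with the symmetry $f(t)=f(2-t)$ and the observation that $\alpha_j$ is the endpoint farther from the vertex under $\beta_j\le 2-\alpha_j$, is exactly the analytic framework the paper relies on in its later analysis (Section~2.4 introduces $f(t)=2t-t^2$ explicitly and the proof of Theorem~\ref{theo:uncluster} invokes $f(t)=f(2-t)$). The paper does not in fact supply a proof of Theorem~\ref{newtTh}; it defers it to the cited reference, so there is no in-text proof to compare against, but your argument is substantively the intended one.

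One correction to your closing remark: the hypothesis is \emph{not} self-sustaining under the raw Newton update. Your own conclusion gives $\beta_{j+1}\le 1$, whereas the hypothesis at the next level would require $1<\beta_{j+1}$; these are incompatible. This is precisely why the algorithm rescales each iterate by $\zeta_{j+1}=2/(1+\alpha_{j+1})$ before the next Newton step, which restores the spectral interval to the form $[\zeta_{j+1}\alpha_{j+1},\,2-\zeta_{j+1}\alpha_{j+1}]$ straddling $1$. The rescaling is structurally necessary to iterate, not a cosmetic implementation detail. This does not affect the correctness of your proof of the single-step statement, since that proof only invokes the hypothesis at level $j$.
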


\noindent
If $\beta_j = 2-\alpha_j$ then the reduction in the condition number from $P_j A$ to $P_{j+1}A$ is near 4
provided that $\alpha_j$ is small:
\[ \frac{\kappa(P_j A)}{\kappa(P_{j+1} A)} = \frac{2-\alpha_j}{\alpha_j} (2\alpha_j - \alpha_j^2)
= (2 - \alpha_j)^2 \approx 4.\]
Under these hypotheses each Newton step provides an average halving of the CG iterations (and hence
of the number of scalar products) as
opposed to twice the application of both the coefficient matrix and the initial preconditioner. 
This idea  can be efficiently employed setting e.g. $P_0 = I$ to cheaply obtain a polynomial preconditioner. Other
choices of $P_0$ will be shortly discussed in Section 3.

At the first Newton stage the preconditioner must be scaled by $\zeta_0 = \dfrac{2}{\alpha + \beta}$ in order to 
satisfy the hypotheses of Theorem \ref{newtTh}. Hence
the eigenvalues of 
$P_1 A = \left(2 \zeta_0 I - \zeta_0^2 A\right) A$ lie in $[\alpha_1, \beta_1] $,
where $\beta_1 = 1$ and $\alpha_1 = (2-\alpha \zeta_0) \alpha \zeta_0$, and the next scaling factor is
$\zeta_1 = \dfrac{2}{1 + \alpha_1}$.
\begin{algorithm}[b!]
	\begin{algorithmic}[1]
		\caption{Newton-based polynomial preconditioner of degree $2^{\texttt{nlev}-1}$}
		\label{NewtAlg}
		\State Approximate the extremal eigenvalues of $A$: $\alpha, \beta$.
		\State Set the number of Newton steps: \texttt{nlev}  
		\smallskip
		\State Set $\zeta_0 = \dfrac{2}{\alpha + \beta}, \quad  
		\zeta_1 = \dfrac{2}{1+ 2\alpha\zeta_{0}-(\alpha\zeta_{0})^2},  \quad
		\zeta_i = \dfrac{2}{1+ 2\zeta_{i-1}-\zeta_{i-1}^2}, \quad i = 2, \texttt{nlev}.$  
		\smallskip
    \State At each CG iteration apply $P_{\texttt{nlev}}$ to the residual vector $\fr$ through the following recursive procedure:
			\begin{eqnarray*}
				P_0 \fr & = & \zeta_0 \fr \nonumber \\
				P_{j+1} \fr &=&  \zeta_{j+1} \left(2 P_j \fr - P_j A P_j\fr\right) , \qquad
				j = \texttt{nlev} -1, \ldots, 0
			\end{eqnarray*} 
	\end{algorithmic}
\end{algorithm}
Analogously, at a generic step $j >1$, $\alpha_j = (2-\alpha_{j-1} \zeta_{j-1}) \alpha_{j-1} \zeta_{j-1}$ and
$\zeta_j = \dfrac{2}{\alpha_j + 1}$. Finally, exploiting the relation $\alpha_{j-1} \zeta_{j-1}
= 2 - \zeta_{j-1}$ we can write
\begin{equation}
	\label{zetaNewt}
 \zeta_j = \dfrac{2}{1+ \zeta_{j-1} (2-\zeta_{j-1})} = 
	    \dfrac{2}{1+ 2\zeta_{j-1}-\zeta_{j-1}^2} .
\end{equation}
Then the recurrence for the preconditioners is obtained from (\ref{newtonP}) by scaling $P_j$ with
$\zeta_j$ as
\begin{eqnarray}
	\label{newrec}
	P_{j+1} &=& 2 \zeta_j P_j - \zeta_j^2 P_j A P_j, \quad j = 0, \ldots, \qquad P_0 = I.
\end{eqnarray}
which can be slightly improved by setting $\hat P_j  = \zeta_j P_j$, thus obtaining
\begin{eqnarray}
	\label{newrec0}
				\hat P_{j+1} &=&  \zeta_{j+1} \left(2 \hat P_j  - \hat P_j A \hat P_j \right) , \qquad
				\hat P_0  =  \zeta_0 I \nonumber
\end{eqnarray}
Application of the polynomial preconditioner to a vector $\fr$ is
described in step 4. of Algorithm \ref{NewtAlg}.

\subsection{Chebyshev preconditioners}
A similar recurrence can be obtained by means of the shifted and scaled Chebyshev polynomial preconditioners.
More details
can be found in \cite{Saad03,MR2169217,CMM}.
\label{ChebSec}
After setting  \[\theta = \frac{\beta+\alpha}{2}, \quad 
	 \delta = \frac{\beta-\alpha}{2}, \quad \text{and} \quad \sigma = \frac{\theta}{\delta} \]
the optimal polynomial preconditioner satisfies the following recursion:
\begin{eqnarray}
	p_{-1}(x) & = &  0 \nonumber \\
	p_0(x) & = &  \frac{1}{\theta} \nonumber \\
	p_k(x) & = & \rho_k\left(2 \sigma \left(1 - \frac{x}{\theta}\right) p_{k-1}(x)
	- \rho_{k-1} p_{k-2}(x) + \frac{2}{\delta}\right), \qquad k \ge 1. \label{polycheb}
\end{eqnarray}
with \begin{equation}
        \label{rhocheb}
        \rho_k = \dfrac{1}{2 \sigma - \rho_{k-1}}, \ k \ge 1 \quad \text{and} \quad \rho_0 = \dfrac{1}{\sigma}.
\end{equation}
The application of the Chebyshev preconditioner of degree $m$,  $P_m = p_m(A)$ to a vector $\fr$, satisfies
a three term recurrence.  In fact, defining
$\fs_k = P_k \fr, \ k \ge 0$, using (\ref{polycheb}) and exploiting 
the definitions of $\delta, \sigma$ and $\theta$, we have
\begin{eqnarray*}
	\fs_0 &=& \frac 1 \theta \fr. \\
	\fs_1 &=& \rho_1\left(2 \sigma \left(1 - \frac{A}{\theta}\right) p_{0}(A) + \frac{2}{\delta}\right) \fr
		= \dfrac{2\rho_1}{\delta} \left(2\fr - \dfrac{A \fr}{\theta}\right) \\
	\fs_k  & = &
	\rho_k\left(2 \sigma \left(1 - \frac{A}{\theta}\right) p_{k-1}(A)\fr
	- \rho_{k-1} p_{k-2}(A)\fr + \frac{2}{\delta}\fr\right) \\&& = \rho_k\left(2 \sigma \left(1 - \frac{A}{\theta} \right )\fs_{k-1}
	- \rho_{k-1} \fs_{k-2} + \frac{2}{\delta}\fr\right) 
	= \rho_k\left(2\sigma \fs_{k-1} - \rho_{k-1}\fs_{k-2}+\dfrac{2}{\delta} \left(\fr - A \fs_{k-1}\right)\right), \quad k > 1.
\end{eqnarray*}
The practical implementation of $P_m \fr$ is described in Algorithm \ref{ChebAlg}.

\begin{algorithm}
	\begin{algorithmic} [1]
		\caption{Computation  of the preconditioned residual $\hat \fr = P_m \fr$ with Chebyshev preconditioner.}
		\label{ChebAlg}
		\State Compute $\rho_k, k = 1,\ldots, m_{\max}$ using (\ref{rhocheb})
		\State $\fx_{old}  = \fr/\theta$ \quad \hspace{12.7mm} (\textit{if {m} $=0$ exit with 
		$\hat \fr = \fx_{old}$})
		\smallskip

		\State $\fx = \dfrac{2\rho_1}{\delta} \left(2\fr - \dfrac{A \fr}{\theta}\right)$
		\hspace{2.7mm}
		(\textit{if {m} $=1$ exit with 
		$\hat \fr = \fx$})

		\For {$k =2: {m}$}
		\smallskip
\State		$\fz = \dfrac{2}{\delta} \left(\fr - A \fx\right)$
\smallskip

		\State		$\hat \fr = \rho_{k}\left(2\sigma \fx- \rho_{k-1}\fx_{old}+\fz\right)$
		\State          $ \fx_{old} = \fx; \ \fx = \hat \fr$
\EndFor


	\end{algorithmic} 
\end{algorithm}
\subsection{Relation between Newton and Chebyshev polynomials}
In \cite{PanPol,CMM} a relation is established between the two algorithms basically by writing
a different recursion involving Chebyshev polynomials taken from the relation
\begin{equation}
 T_{2k}(x) = 2 T_k^2(x) - 1.
	\label{T2k}
\end{equation}
The Newton-based polynomial preconditioner is then proved equal to the Chebyshev polynomial preconditioner
based on the recursion (\ref{T2k}). Only, in the Newton case, polynomials in the sequence have degrees
$k = 2^j-1, \ j = 0, \ldots$, while with the original Chebyshev algorithm every nonnegative integer can be used
as the degree of the polynomial.
\subsection{Avoiding eigenvalue clustering}
A drawback of the polynomial preconditioners is that clustering may arise in the extremal parts
of the eigenspectrum of the preconditioned matrix, thus limiting the acceleration of the Conjugate Gradient method.
In \cite{CMM} a modification of the basic algorithms is proposed in order to mitigate such an undesired occurrence.
In this Section we analyze more deeply
the effect of this modification.

Let us first consider the first step of the original Newton approach.
 The spectral interval $[\alpha, \beta]$ of $A$ is first scaled by $\dfrac{2}{\alpha + \beta}  = \dfrac 1 \theta$ obtaining
 $[\hat \alpha,  \hat \beta] = \left[ \dfrac{2 \alpha}{\alpha + \beta}, \dfrac{2 \beta}{\alpha + \beta} \right]$. 
Following the results of Theorem 2.1 with $f(t) = 2t - t^2$, the spectral interval of $P_1 A$ is $[f(\hat \alpha), 1]$, with a reduction
 of the condition number of about 4, as explained in Section 2.1. However,
 \textit{the extrema of the scaled spectral interval are both mapped onto the left endpoint}
  $f(\hat \alpha) = f(\hat \beta) = \dfrac{4 \alpha \beta}{(\alpha+\beta)^2}$  of $P_1 A$ thus originating
  a cluster around the smallest eigenvalue, which is in principle detrimental for the CG convergence. 

  To avoid this, in \cite{CMM} a scaling parameter $\xi$ is introduced  in order to 
  modify the definition of parameter
$\theta$ in the Chebyshev/Newton algorithms as
\begin{equation} \label{thetamod}  \bar \theta = \dfrac{\beta+\alpha}{2} \left(1 + \xi\right).\end{equation}
	The parameter  $\xi$ should be small enough to apply just a slight modification of the native Chebyshev/Newton algorithm.
	Multiplying the  original spectral interval $[\alpha,  \beta]$ by  $\bar \theta^{-1} = \eta\theta^{-1}$ with $\eta = \dfrac{1}{1 + \xi}$, we obtain
 \[[\hat \alpha_\eta, \hat \beta_\eta] \equiv \left[ \dfrac{2 \eta \alpha}{\alpha + \beta}, \dfrac{2 \eta \beta}{\alpha + \beta} \right]\]
 which will be now mapped by the function $f(t)$ onto $[\alpha_\eta^{(1)}, \beta_\eta^{(1)}] := [f(\hat \alpha_\eta), 1]$. 

 Let us denote  by
 $\kappa = \dfrac{1}{f(\hat \alpha)}$  and
 $\kappa_{\eta} = \dfrac{1}{f(\hat \alpha_\eta)}$ the condition numbers of the preconditioned matrix before and after the modification,
 respectively.
 We first prove that modification (\ref{thetamod}) provides a modest increment of the condition number of the preconditioned matrix at step 1, assuming
 $\xi$ sufficiently small.
 \begin{theorem}
 Let $\xi = O(\kappa^{-1})$, then
		 \begin{equation*}   \dfrac {\kappa_\eta}{\kappa} =  1+ \xi + O(\xi^2).\end{equation*}
 \end{theorem}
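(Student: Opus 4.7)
The plan is to reduce the ratio $\kappa_\eta/\kappa$ to an explicit rational expression in $\xi$ and $\hat\alpha$, and then expand in small $\xi$. Since $f(t)=2t-t^2=t(2-t)$ and $\hat\alpha_\eta=\eta\hat\alpha$, I would first write
\[
\frac{\kappa_\eta}{\kappa}\;=\;\frac{f(\hat\alpha)}{f(\eta\hat\alpha)}\;=\;\frac{\hat\alpha(2-\hat\alpha)}{\eta\hat\alpha(2-\eta\hat\alpha)}\;=\;\frac{1}{\eta}\cdot\frac{2-\hat\alpha}{2-\eta\hat\alpha}.
\]
Substituting $\eta=(1+\xi)^{-1}$, a routine simplification gives
\[
\frac{\kappa_\eta}{\kappa}\;=\;\frac{(1+\xi)^{2}(2-\hat\alpha)}{(2-\hat\alpha)+2\xi}\;=\;\frac{(1+\xi)^{2}}{1+\dfrac{2\xi}{2-\hat\alpha}}.
\]

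Next I would use the relationship between the sizes of $\hat\alpha$ and $\kappa$. Since $\hat\alpha=2\alpha/(\alpha+\beta)\le 1$ and $\kappa=1/(\hat\alpha(2-\hat\alpha))$, one has $\hat\alpha\le 1/\kappa$ (in fact $\hat\alpha=1/(2\kappa)+O(\kappa^{-2})$ when $\hat\alpha$ is small). Hence the hypothesis $\xi=O(\kappa^{-1})$ implies $\hat\alpha=O(\xi)$, and in particular $2-\hat\alpha=2+O(\xi)$. I would record this observation before expanding.

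Given these orders, the expansion is a short computation: $\frac{2\xi}{2-\hat\alpha}=\xi+O(\xi^{2})$, so $(1+2\xi/(2-\hat\alpha))^{-1}=1-\xi+O(\xi^{2})$, and multiplying by $(1+\xi)^{2}=1+2\xi+O(\xi^{2})$ yields
\[
\frac{\kappa_\eta}{\kappa}\;=\;(1+2\xi+O(\xi^{2}))(1-\xi+O(\xi^{2}))\;=\;1+\xi+O(\xi^{2}),
\]
which is the claimed estimate.

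I do not expect any real obstacle here: once the ratio is written in closed form the result is a direct Taylor expansion. The only point that deserves a line of justification is why $\hat\alpha$ may be absorbed into $O(\xi)$, i.e.\ the control $\hat\alpha=O(\kappa^{-1})$ in the regime where the theorem is meaningful; this follows from $\kappa^{-1}=\hat\alpha(2-\hat\alpha)\ge\hat\alpha$.
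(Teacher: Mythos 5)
Your proof is correct and reaches the stated estimate by a route that is cleaner than the one in the paper. The paper Taylor-expands $f$ at $\hat\alpha$ (writing $f(\hat\alpha_\eta)=f(\hat\alpha)+(\hat\alpha_\eta-\hat\alpha)f'(\hat\alpha)+\cdots$) and then simplifies term by term; you instead cancel the common factor $\hat\alpha$ and obtain the exact rational identity $\kappa_\eta/\kappa=(1+\xi)^{2}\big/\big(1+\tfrac{2\xi}{2-\hat\alpha}\big)$, after which the expansion is a one-line Taylor computation. The closed form makes the role of $\hat\alpha$ completely transparent: one can read off that the coefficient of $\xi$ is $\tfrac{2(1-\hat\alpha)}{2-\hat\alpha}=1-\tfrac{\hat\alpha}{2-\hat\alpha}$, which is $1+O(\hat\alpha)$, so the size assumption on $\hat\alpha$ relative to $\xi$ is exactly what is needed to push the correction into $O(\xi^2)$. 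A further benefit of your route is that it sidesteps a sign slip present in the paper's own argument: since $\kappa=1/f(\hat\alpha)$ and $\kappa_\eta=1/f(\hat\alpha_\eta)$, the ratio is $\kappa_\eta/\kappa=f(\hat\alpha)/f(\hat\alpha_\eta)$, whereas the paper writes the reciprocal and later compensates by (incorrectly) asserting $\eta=1+\xi+O(\xi^2)$ rather than $\eta=(1+\xi)^{-1}=1-\xi+O(\xi^2)$; the two errors cancel, but your version gets the orientation right from the first line. One small caveat shared by both proofs: the hypothesis $\xi=O(\kappa^{-1})$ is in fact used in the stronger two-sided sense $\xi\asymp\kappa^{-1}$, since absorbing $\hat\alpha\xi$ into $O(\xi^2)$ requires $\hat\alpha=O(\xi)$, i.e.\ $\kappa^{-1}=O(\xi)$ as well. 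You correctly identify this as the one point needing justification; it is also implicit (and unremarked) in the paper's step $(\eta-1)\,O(\kappa^{-1})=O(\xi^2)$.
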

 { 
 \begin{proof}
	 First we have that 
	 \[1-\eta = \frac{\xi}{1+\xi} = \xi + O(\xi^2), \quad \text{  and  } \quad
	 \hat \alpha_\eta - \hat \alpha = \left(\eta -1\right)
	 \dfrac{2 \alpha}{\alpha + \beta} = (\eta -1)  O(\kappa^{-1}) = O(\xi^2) \]
	 then
		 \begin{eqnarray*}  \dfrac {\kappa_\eta}{\kappa} = \dfrac{f(\hat \alpha_\eta)}{f(\hat \alpha)}   &
			 = &  \dfrac{f(\hat \alpha)  + (\hat \alpha_\eta - \hat \alpha) f'(\hat \alpha)  -2(\hat \alpha_\eta - \hat \alpha)^2}{f(\hat \alpha)}  \\
	 &=& 1 + \dfrac{\dfrac{2 \alpha }{\alpha + \beta} (\eta -1) \left(2 - 2 \dfrac{2 \alpha}{\alpha + \beta}\right)+ O(\xi^4)}
			 {\dfrac{4 \alpha \beta}{(\alpha+\beta)^2} }  =  \\
			  &=& 1 + 
			 \frac{(\alpha+\beta)^2}  {4 \alpha \beta}
			  \dfrac{2 \alpha }{\alpha + \beta} (\eta -1) \dfrac{2 (\beta - \alpha)}{\alpha + \beta}
			 + O(\xi^3)= \\
			  & = & 1 +  (\eta -1) \dfrac{\beta - \alpha}{\beta} + O(\xi^3) \\
			  &=&  \eta +  \frac{1}{\kappa}(1-\eta) + O(\xi^3) 
			  =  \eta +  O(\xi^2) =
		  1+ \xi + O(\xi^2).
		 \end{eqnarray*}
 \end{proof}
 }
%

%

 \noindent
 Though the condition number $\kappa_\eta$ slightly increases with respect to $\kappa$,
 the favorable outcome is that now $f(\hat \alpha_\eta) \ne f(\hat \beta_\eta)$ with a consequent separation of the smallest eigenvalues.
 Moreover,  a number $k \ge 1$ of the smallest eigenvalues are mapped onto as many of the smallest eigenvalues of the preconditioned matrix.
 The next theorem states that the $k$ (with $k \ge 1$) smallest eigenvalues of the preconditioned matrix are the map (through the function $f$) of exactly
 the $k$ smallest eigenvalues of $A$. This also means that the largest eigenvalues of $A$  are no longer mapped onto the same smallest eigenvalues
 of $P_1 A$, as it holds without modification.
 \begin{theorem}
	 \label{theo:uncluster}
	 Let $\eta$ be such that $\hat \alpha_\eta + 2(1 - \eta) < 1$. Denoting by 
\begin{eqnarray*} \hat \alpha_\eta = && \lambda_1^{(0)} \le \lambda_2^{(0)} \le \ldots \le \lambda_n^{(0)} = \hat \beta_\eta,
	 \qquad \text{and} \\
 \hat \alpha_\eta^{(1)} =&& \lambda_1^{(1)} \le \lambda_2^{(1)} \le \ldots \le \lambda_n^{(1)} = \hat \beta_\eta^{(1)} \end{eqnarray*}
		 the eigenvalues of $A$ and $P_1 A$, respectively, 
	 and $k$ the integer satisfying $\lambda_k^{(0)} \le \hat \alpha_\eta + 2(1 - \eta) \le \lambda_{k+1}^{(0)}$ then
	 \[ \lambda_j^{(1)} = f(\lambda_j^{(0)}), \qquad j = 1, \ldots, k.\]
 \end{theorem}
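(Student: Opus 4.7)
The plan is to exploit the shape of the quadratic $f(t) = 2t - t^2 = 1 - (1-t)^2$, which attains its maximum $1$ at $t=1$ and is symmetric about $t=1$ in the sense that $f(t) = f(2-t)$. Since $P_1 A = 2\zeta_0 A - \zeta_0^2 A^2 = f(\zeta_0 A)$ with $\zeta_0 = 1/\bar\theta$, the eigenvalues of $P_1 A$ are exactly $\{f(\lambda_i^{(0)})\}_{i=1}^n$, so the whole task reduces to identifying which $\lambda_i^{(0)}$ produce the $k$ smallest values of $f$.

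I would first establish the geometric meaning of the threshold $\hat\alpha_\eta + 2(1-\eta)$. Since $\hat\alpha + \hat\beta = 2$, we have $\hat\beta_\eta = \eta(2 - \hat\alpha) = 2\eta - \hat\alpha_\eta$, and therefore $2 - \hat\beta_\eta = \hat\alpha_\eta + 2(1-\eta)$. Hence the hypothesis $\hat\alpha_\eta + 2(1-\eta) < 1$ is equivalent to $\hat\beta_\eta > 1$, and the threshold is precisely the reflection of $\hat\beta_\eta$ about the symmetry axis $t=1$ of $f$. The $k$ smallest eigenvalues $\lambda_1^{(0)}, \ldots, \lambda_k^{(0)}$ are thus those whose reflected counterparts $2 - \lambda_j^{(0)}$ lie outside the spectrum (strictly above $\hat\beta_\eta$), and consequently are not in $f$-collision with any other eigenvalue.

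The heart of the argument is a two-case proof that for every $i > k$, $f(\lambda_i^{(0)}) \ge f(\lambda_k^{(0)})$. If $\lambda_i^{(0)} \le 1$, monotonicity of $f$ on $[0,1]$ together with $\lambda_i^{(0)} \ge \lambda_{k+1}^{(0)} \ge \hat\alpha_\eta + 2(1-\eta) \ge \lambda_k^{(0)}$ gives the inequality at once. If instead $\lambda_i^{(0)} > 1$, I would invoke the symmetry $f(\lambda_i^{(0)}) = f(2 - \lambda_i^{(0)})$ and observe that $2 - \lambda_i^{(0)} \ge 2 - \hat\beta_\eta = \hat\alpha_\eta + 2(1-\eta) \ge \lambda_k^{(0)}$, so monotonicity on $[0,1]$ again yields $f(\lambda_i^{(0)}) \ge f(\lambda_k^{(0)})$. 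This is the step that genuinely uses the modification $\eta < 1$: without it, $2 - \hat\beta = \hat\alpha$, so every large eigenvalue collides in $f$-value with a small one and the case split collapses.

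Finally, the $k$ smallest eigenvalues $\lambda_1^{(0)}, \ldots, \lambda_k^{(0)}$ all lie in $[\hat\alpha_\eta,\, \hat\alpha_\eta + 2(1-\eta)] \subset [0,1)$, where $f$ is strictly increasing, so $f(\lambda_1^{(0)}) \le \cdots \le f(\lambda_k^{(0)})$. Combining with the previous paragraph, these are precisely the $k$ smallest eigenvalues of $P_1 A$ listed in order, giving $\lambda_j^{(1)} = f(\lambda_j^{(0)})$ for $j = 1, \ldots, k$. The main technical subtlety I anticipate is simply recognizing the identity $2 - \hat\beta_\eta = \hat\alpha_\eta + 2(1-\eta)$; once the threshold is interpreted as the mirror of $\hat\beta_\eta$ across the peak of $f$, the rest is a routine monotonicity-and-symmetry argument.
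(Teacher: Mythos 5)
Your proof is correct and follows essentially the same approach as the paper: both exploit the symmetry $f(t) = f(2-t)$, the monotonicity of $f$ on either side of $t=1$, and the identification of $\hat\alpha_\eta + 2(1-\eta)$ as the mirror image of $\hat\beta_\eta$ across the axis $t=1$. Your explicit case split for $i>k$ is just a slightly more spelled-out version of the paper's compact statement (whose final displayed inequality reads $\max_{j\ge k+1}$ where $\min_{j\ge k+1}$ is clearly intended), so the two arguments coincide.
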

 \begin{proof}
 Since $f(t) = f(2-t), \ \forall t \in \R$ we have
 \[ f(\hat \beta_\eta) = f(2 - \hat \beta_\eta) = f\left(2 \dfrac{\alpha + (1-\eta) \beta}{\alpha + \beta} \right)
	 = f(\hat \alpha_\eta + 2(1 - \eta)). \]
	 Taking into account that the
	 function $f$ is increasing in $[\hat \alpha_\eta,1]$ and decreasing in $[1, \hat \beta_\eta]$ we have
	 \[f(\lambda_1^{(0)}) \le \ldots \le f(\lambda_k^{(0)}) \le f(\hat \alpha_\eta + 2(1 - \eta))  \le
	 \max_{j \ge k+1} f(\lambda_j^{(0)}), 
	 \]
	 and the thesis follows.
 \end{proof}

The situation is depicted in Figure \ref{Fig:nonwith} where the clustering (unclustering) of the extremal eigenvalues is shown
for $\xi = 0$ ($\xi = 0.05$). In this example we have $\lambda_1^{(0)}  = 0.1, \ \lambda_2^{(0)}  = 0.14, \ \lambda_2^{(0)} = 0.18$.
All these three eigenvalues are less than $\hat \alpha_\eta + 2(1 - \eta) \approx 0.195$ and therefore they are mapped
onto the leftmost part of the spectrum (blue asterisks, left panel).
With $\xi=0$ the eigenvalues $\lambda_{n-2}^{(0)}=1.82$, $\lambda_{n-1}^{(0)}=1.86$, $\lambda_n^{(0)}=1.9$ are mapped onto the same eigenvalues $\lambda_1^{(1)}$,
$\lambda_2^{(1)}$, $\lambda_3^{(0)}$, thus creating a cluster on the leftmost part of the spectrum. By distinction, with $\xi=0.05$ this is no longer true (blue asterisk, right panel).

\begin{figure}[h!]
	\begin{center}
	\caption{Eigenvalues of $A$, green squares on the $x$-axis, and of $P_1 A$, blue stars on the $y$-axis. Original algorithm (left), modified  algorithm with $\xi = 0.05$ (right).}
	\label{Fig:nonwith}
		\vspace{-4mm}
	\includegraphics[width=7.8cm]{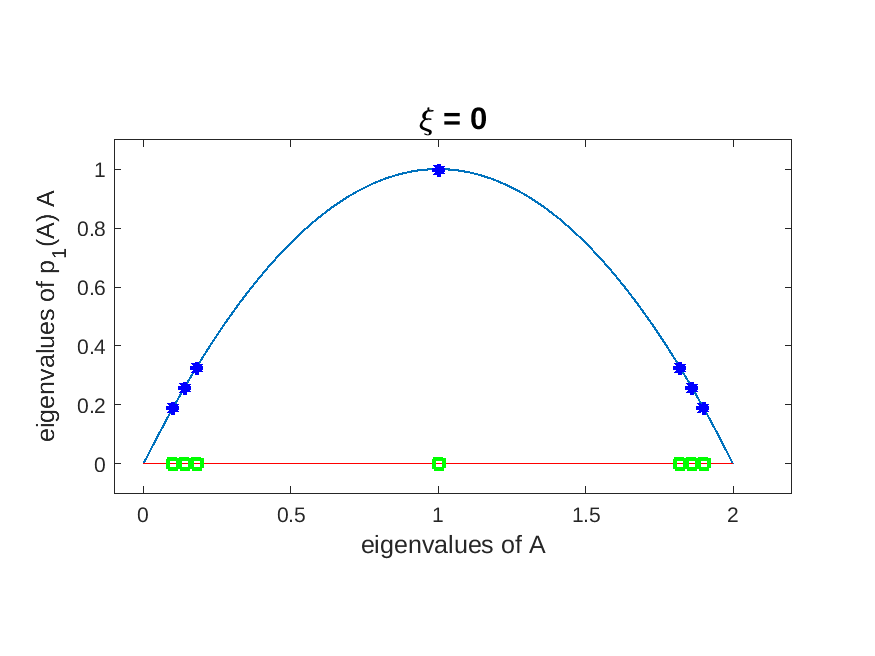}
		\hspace{-4mm}
	\includegraphics[width=7.8cm]{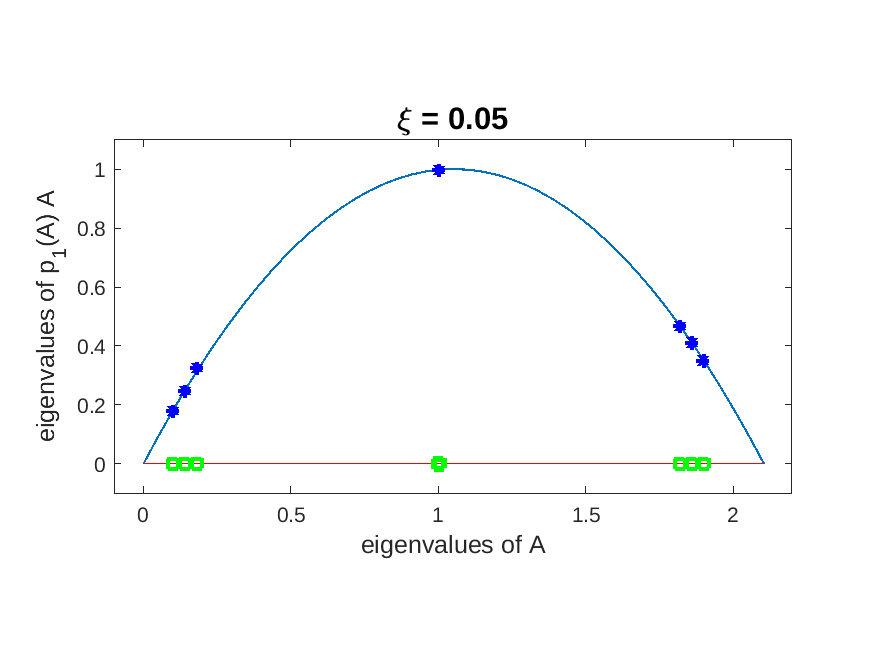}
	\end{center}
		\vspace{-5mm}
	\end{figure}

 Subsequent application of the Newton preconditioner will enhance this behavior: slight increase of the condition number (compared
 to the optimal one) at each Newton application, together with a progressive unclustering of the smallest eigenvalues. 
 To experimentally show this behavior we consider the solution of  the following linear system $A \fx = \fb$ with a random right hand side and  a diagonal matrix $A$ of size $n = 10^5$ such that
 \[ A_{ii} = i, \quad i = 1, \ldots, 10^5, \qquad  \text{nlev} = 6 \text{ (polynomial degree} = 63), \qquad \texttt{tol} = 10^{-10}.  \]
 We obtained the results summarized in Table \ref{tab:epsil} where we report the extremal eigenvalues of the preconditioned
 matrices for different values of $\xi$.  In addition to the condition number of $P_{63} A$ we computed a partial condition number, related
 to the $10$th smallest eigenvalue,
 $\kappa_{10} = \frac{\lambda_{\max}}{\lambda_{10}}$. 
 significantly.

 \begin{table}[h!]
	 \caption{PCG iterations to solve the diagonal problem and a few of the smallest eigenvalue
	 of the preconditioned matrices with
	 a polynomial preconditioner of degree $k = 63$, for different values of the scaling factor $\xi$. 
	 The condition numbers and the partial condition numbers are also provided.}
	 \label{tab:epsil}
 \begin{center}
 \begin{tabular}{l|rcccc|rr}
	 $\xi $ &  PCG iters & $\lambda_1$ & $\lambda_2$ &  $\lambda_5$ & $\lambda_{10}$  & $\kappa \equiv \frac{\lambda_{\max}}{\lambda_{1}}$ 
	 & $\frac{\lambda_{\max}}{\lambda_{10}}$ \\
	  \hline
	 $0$       &58& 0.03987 & 0.03987 & 0.03987 & 0.03987  & 25.08 &25.83\\ \hdashline[0.5pt/1pt]
	 $10^{-6}$ &57& 0.03984 & 0.04181 & 0.04181 & 0.04180  & 25.10 &23.92 \\
	 $10^{-5}$ &50& 0.03961 & 0.05901 & 0.05901 & 0.05901  & 25.25 &16.95\\
	 $10^{-4}$ &34& 0.03742 & 0.07388 & 0.17768 & 0.21046  & 26.72 & 4.75 \\
	 $10^{-3}$ &39& 0.02511 & 0.04976 & 0.12096 & 0.23084  & 39.82 & 4.33\\
	 $10^{-2}$ &62& 0.00898 & 0.01789 & 0.04419 & 0.08664  &111.31 &11.54  \\
 \end{tabular}                                                                               
 \end{center}
 \end{table}

 Obviously the smallest condition number is provided by the non modified algorithm ($\xi = 0$). 
 If $\xi$ is too small, then
 no significant effect is observed (second row in the Table). If $\xi$ is too large,  the unclustering of the eigenvalues
 does not pay for the large increasing of the condition number ($\xi = 10^{-2}$ in the Table).
 The optimal scaling is aimed at separating the smallest eigenvalues and at the same time reducing the partial condition number  $\kappa_{10}$
 (see last column in Table \ref{tab:epsil}) which is more informative about PCG convergence, when a few outliers (roughly 10 in this
 test case) are present~\cite{agSIAM}.

 The choice of the parameter $\xi$ is problem dependent. It is related to the degree of the polynomial, to the condition
 number of the original problem and to the separation of the smallest eigenvalues (to say nothing
 of the right-hand-side of the system).

\section{Polynomial acceleration of a given preconditioner}
Let us now assume that a (first level) preconditioner is available in factored form as 
\[P_{\text{seed}} = W W^T,\]
where $P_{\text{seed}}$ can be the square root of the inverse diagonal of $A$, the inverse of the Cholesky factor $W = L^{-1}$ or the triangular factor of an approximate inverse preconditioner.
In such a case the polynomial preconditioner can be applied to the symmetric matrix
\[\hat A = W^T A W.\]
If the  first level preconditioner can be constructed and applied in a matrix free environment then the whole
preconditioner 
can still be applied in a matrix-free environment.

\subsection{Low-rank acceleration}

The polynomial preconditioner needs the approximation of the two extremal eigenvalues, which are usually computed
together with the corresponding eigenvectors. In general, the availability of a number of the
leftmost (approximate) eigenvectors can be exploited to further improve the PCG convergence provided by the polynomial preconditioner.

Let us assume that $\fv_1, \ldots  \fv_p, \fv_{p+1}, \ldots, \fv_n$ are the eigenvectors of $A$ (or $\hat A$), and
$\lambda_1 \le \ldots \le \lambda_p \le \lambda_{p+1} \le \ldots \le \lambda_n$ the corresponding eigenvalues. Defining
\[ V = \begin{bmatrix} \fv_1 & \fv_2 & \ldots & \fv_p \end{bmatrix}, \qquad \Lambda = \text{diag} (\lambda_1, \ldots, \lambda_p), \]
	the polynomial preconditioner of degree $m$, $P_0$ in this section, computed for $A$ ($\hat A$) can be modified to obtain
	a spectral preconditioner as~\cite{CarDufGir,LB_Algorithms_2020}
			\[ P = P_0 + V (V^T A V)^{-1} V^T.\]
			Since $\fv_j, j = 1, \ldots, p$ are also eigenvectors of $P_0 A$,
			the following properties are easily verified:
			\begin{eqnarray}
				\label{plusone}
				P A \fv_j  &=& P_0 A \fv_j + \fv_j  =        (1 + p_m(\lambda_j)) \fv_j, \qquad \qquad j = 1, \ldots, p  \\
				P A \fv_j  &=& P_0 A \fv_j + 
				V (V^T A V)^{-1} \sum_{j=k}^p \tilde \fv_k^T \fv_j \approx
				p_m(\lambda_j) \fv_j, \qquad j = p+1, \ldots 
				\end{eqnarray}
				Since Theorem \ref{theo:uncluster} shows  that, with the $\xi$-modification,
                        the polynomial preconditioner matches the smallest eigenvalue of $A$
			on the smallest eigenvalue of $P_0 A$, 
			the latter are incremented by one, due to (\ref{plusone}), being
                        shifted in the interior of the spectrum with a consequent reduction of the condition number.

	\subsection{Preliminary Numerical Results}
	In this section we present some results in a sequential environment showing the acceleration provided by the polynomial preconditioner
	applied to a first level preconditioner and modified with low-rank matrices.
	We consider the solution of a linear system with matrix {\tt Cube\_5317k} (available at \url{http://www.dmsa.unipd.it/~janna/Matrices/}) arising from the equilibrium of a concrete cube discretized by a
regular unstructured tetrahedral grid with size $n = 5\,317\,443$ and nonzeros  \texttt{nnz} = $ 222\,615\,369 $.

\begin{table}[h!]
	\caption{Results for the matrix \texttt{Cube\_5317k}. The polynomial preconditioner has been modified with $\xi = 5\times 10^{-4}$}
	\label{tab:cubo}
	\begin{center}
	\begin{tabular}{r|rc|rc|rc|rc}
		&\multicolumn{4}{c|}{$P_{\text{seed}}= $ diagonal preconditioner} &
		\multicolumn{4}{c}{$P_{\text{seed}}= $ IC preconditioner} \\
		\hline
		& \multicolumn{2}{c|}{Polynomial + spectral } &
		 \multicolumn{2}{c|}{Polynomial }  
		& \multicolumn{2}{c|}{Polynomial + spectral } &
		 \multicolumn{2}{c}{Polynomial }  \\
	 deg & iter & CPU & iter & CPU & iter & CPU & iter & CPU \\
	\hline
		0   & 8597 &  4481.26 &9553&  4083.40&1359  &  1270.75 &1853  &  1476.07 \\
		1   & 4380 &  4038.17 &4865&  4066.76& 712  &  1283.02& 961   & 1499.21 \\
		3   & 2210 &  3829.54 &2434&  4008.73& 370  &  1219.82& 497   & 1599.90 \\
		7   & 1111 &  3726.72 &1226&  4006.06& 187  &  \textbf{1208.19}& 251   & 1594.50 \\
		15  &  563 &  \textbf{3716.18} & 620&  4042.62&  97  &  1240.66 &126   & 1645.59 \\
		31  &  292 &  3823.50 & 320&  4166.52&  51  &  1294.72 &64    &1605.11 \\
\hline              
\end{tabular}
	\end{center}
\end{table}

As the first level preconditioner we considered both the diagonal preconditioner and an incomplete Cholesky factorization 
with fill-in. In both cases we computed the 10 leftmost eigenpairs to a low accuracy ($\texttt{tol}=10^{-3}$ on the
relative residual). We neglect this preprocessing time taking in mind the case in which many linear systems
have to be solved with the same coefficient matrix (this is the case e.g. in linear transient problems).

			The sequential results provided throughout the paper have been obtained with a Matlab code running on an
Intel Core(TM) i7-8550U CPU \@1.80GHz.
The results  reported in Table \ref {tab:cubo} reveal that the combination of polynomial preconditioner and low-rank acceleration can be advantageous.

Considering for example the case with $P_{\text{seed}} = (\text{diag}(A))^{-1}$, the cost of the low-rank modification can be significant when the degree of the polynomial preconditioner is low while the relative influence of this task  decreases when the degree grows, since in this case the predominant cost is that of the high number of matrix-vector products.

\section{Example of application: Discrete Fracture Network (DFN) flow model}

As a relevant example of application of the proposed approach, we consider the DFN flow model developed in~\cite{Berrone-et-alA}.
The flow simulation in highly-fractured rock systems is computationally very demanding, because of the complexity of the
domain and the uncertainty characterizing the geometrical configuration. In this context, DFN models are usually preferred
when the fracture network has a dominant impact on the fluid flow dynamics. They explicitly represent the fractures as
intersecting planar polygons and neglect the surrounding rock formation, prescribing continuity constraints for the fluid
flow along the fracture intersections, usually called {\em traces}. Here, we briefly recall the original approach for
DFN models introduced in~\cite{Berrone-et-alA} and focus on its discrete algebraic formulation.

Let $\Omega$ be a connected three-dimensional fracture network consisting of the union of $n_f$ intersecting planar polygons
$\overline{\omega}_i$, $i=1,\ldots,n_f$, where $\overline{\omega}_i=\omega_i\cup\gamma_i$ is the closure of the open
planar domain $\omega_i$ with its linear boundary $\gamma_i$. The fluid flow through $\omega_i$ is assumed to be
laminar and governed by the standard mass balance equation coupled with Darcy's law, with appropriate essential and natural
boundary conditions on $\gamma_i$ to guarantee the well-posedness of the formulation:
\begin{subequations}
	\begin{align}
		-\nabla \cdot \left( \tensorTwo{\boldsymbol K} \nabla h \right) &= q, \qquad \mbox{in } \omega_i \in \Omega,
		\label{eq:flow_i} \\
		h_{|\gamma_i^D} &= h_i^D, \qquad \mbox{on } \gamma_i^D, \label{eq:Dir_i} \\
		\tensorTwo{\boldsymbol K} \nabla h \cdot \vec{n}_i &= g_i, \qquad \mbox{on } \gamma_i^N, \label{eq:Neu_i}
	\end{align}
	\label{eq:flow_model}
\end{subequations}
where $\gamma_i^D\cup\gamma_i^N=\gamma_i$, $\gamma_i^D\cap\gamma_i^N=\emptyset$, and $\gamma_i^D\neq\emptyset$.
In equations \eqref{eq:flow_model}, the scalar function $h$ is the hydraulic head, $\tensorTwo{\boldsymbol K}$ is the
fracture transmissibility tensor, which is assumed to be symmetric and uniformly positive definite, $\vec{n}_i$
is the outward normal to $\gamma_i^N$, $q$ is the known discharge within the fracture, and $h_i^D$ and $g_i$ are 
the given hydraulic head and flux prescribed along the fracture boundary, respectively. 
Since the fracture network is connected, there is a flux exchange through the linear traces between the intersecting
polygons. Let $\sigma_k^{i,j}$ denote the intersection between $\overline{\omega}_i$ and $\overline{\omega}_j$, which
we assume to be represented by a single close segment, with $\Sigma$ the union of the $n_s$ traces, $\Sigma=\cup_{k=1}^{n_s}
\sigma_k^{i,j}$. Indicating by $h_i$ the restriction of $h$ to $\overline{\omega}_i$, the continuity of the hydraulic 
head and the conservation of fluxes across the traces requires that:
\begin{subequations}
	\begin{align}
		h_{i|\sigma_k^{i,j}} - h_{j|\sigma_k^{i,j}} &= 0, \qquad \forall \; \sigma_k^{i,j} \in \Sigma,
		\label{eq:head_cont} \\
		\llbracket \tensorTwo{\boldsymbol K} \nabla h_i \cdot \vec{n}_k^i \rrbracket_{\sigma_k^{i,j}} +
		\llbracket \tensorTwo{\boldsymbol K} \nabla h_j \cdot \vec{n}_k^j \rrbracket_{\sigma_k^{i,j}} &= 0,
		\qquad \forall \; \sigma_k^{i,j} \in \Sigma, \label{eq:flux_cont}
	\end{align}
	\label{eq:trace_cont}
\end{subequations}
with $\vec{n}_k^i$ the outer normal to the trace $\sigma_k^{i,j}$ lying on the fracture $\overline{\omega}_i$ and 
the symbol $\llbracket \cdot \rrbracket_{\sigma_k^{i,j}}$ denoting the jump of the quantity within brackets through
$\sigma_k^{i,j}$. The DFN flow model consists of finding the hydraulic head $h:\Omega\rightarrow\R$ satisfying the
governing PDEs \eqref{eq:flow_model} under the constraints \eqref{eq:trace_cont}.

The numerical solution to the strong form \eqref{eq:flow_model}-\eqref{eq:trace_cont} is re-formulated 
in~\cite{Berrone-et-alA} as a PDE-constrained optimization problem in weak form. Let us introduce an appropriate
measurable function space $\mathcal{H}$ for the representation of $h$, such as, for instance:  
\begin{equation}
	\mathcal{H} = \left\{ \eta \in H^1 (\omega_i) : \eta_{|\gamma_i^D} = h_i^D, \forall i=1,\ldots,n_f \right\},
	\label{eq:Hspace}
\end{equation}
with $\mathcal{H}_0$ the corresponding counterpart with homogeneous conditions along $\gamma_i$. We use a mixed
formulation where the jump $\llbracket \tensorTwo{\boldsymbol K} \nabla h_i \cdot \vec{n}_k^i \rrbracket_{\sigma_k^{i,j}}$,
living along every trace $\sigma_k^{i,j}$ for all $i$ and $j$, is described by the unknown function $u_i:\sigma_k^{i,j}
\rightarrow\R$ belonging to the proper measurable function space $\mathcal{U}_i$, which is defined according to the selection
of $\mathcal{H}$. For example, for the choice \eqref{eq:Hspace}, $\mathcal{U}_i$ can be selected as a subspace of
$L^2(\sigma_k^{i,j})$, with the global space $\mathcal{U}$ including all $\mathcal{U}_i$. The set of constraints  
\eqref{eq:trace_cont} can be prescribed by minimizing the functional $\psi(h,u):\mathcal{H}\times\mathcal{U}\rightarrow\R$:
\begin{equation}
	\psi (h,u) = \frac 12 \sum_{\sigma_k^{i,j}\in\Sigma} \left( \left\| h_i - h_j \right\|^2_{\mathcal{H}} + \left\| u_i + u_j
	+ \alpha (h_i + h_j) \right\|^2_{\mathcal{U}} \right),
	\label{eq:func_psi}
\end{equation}
where $\alpha\in\R$ is a regularization parameter. The minimization of $\psi(h,u)$ under the conditions provided by 
equations \eqref{eq:flow_model} is enforced by using Lagrange multipliers. The weak form of \eqref{eq:flow_model} reads:
\begin{equation}
	\left( \nabla \eta, \tensorTwo{\boldsymbol K} \nabla h \right)_{\omega_i} - \left( \eta, u \right)_{\sigma_k^{i,j}} =
	- \left( \eta, q \right)_{\omega_i} + \left( \eta, g_i \right)_{\gamma_i^N}, \qquad \forall \; \eta \in \mathcal{H}_0,
	\; i=1,\ldots,n_f.
	\label{eq:weak_flow}
\end{equation}
Denoting by $p\in\mathcal{P}$ the Lagrange multipliers living in the appropriate space $\mathcal{P}$, the DFN flow solution
is obtained by finding $(h,u,p)\in\mathcal{H}\times\mathcal{U}\times\mathcal{P}$ that minimizes:
\begin{equation}
	\Psi (h,u,p) = \psi (h,u) + p \sum_i \left[ a_i ( \eta, h ) - c_i ( \eta, u ) - q_i (\eta) \right],
	\qquad \forall \; \eta \in \mathcal{H}_0,
	\label{eq:final_func}
\end{equation}
with $a_i(\eta,h)=(\nabla\eta,\tensorTwo{\boldsymbol K}\nabla h)$, $c_i=(\eta,u)_{\sigma_k^{i,j}}$, and $q_i=-(\eta,q)_{\omega_i}
+(\eta,g_i)_{\gamma_i^N}$.

\subsection{Discrete formulation}
The minimization of $\Psi(h,u,p)$ in \eqref{eq:final_func} is carried out approximately by replacing the function spaces
$\mathcal{H}$, $\mathcal{U}$ and $\mathcal{P}$ with their discrete counterparts $\mathcal{H}^h$, $\mathcal{U}^h$ and 
$\mathcal{P}^h$ with finite size $n^h$, $n^u$, and $n^p$, respectively. A relevant advantage of this formulation is
that independent computational grids can be introduced for each fracture following the standard finite element method,
with no need of enforcing the mesh conformity along the traces.

The  discrete counterpart of (\ref{eq:final_func}),
$\Psi(h^h,u^h,p^h)$, with $(h^h,u^h,p^h)\in\mathcal{H}^h,\mathcal{U}^h,\mathcal{P}^h$, is obtained 
by writing the three variables as linear combinations of the respective basis functions.
Denoting with $\fh = \begin{bmatrix}h_1, \ldots, h_{n^h}\end{bmatrix}^T$,
$\fu = \begin{bmatrix}u_1, \ldots, u_{n^u}\end{bmatrix}^T$ and
$\fp = \begin{bmatrix}p_1, \ldots, p_{n^p}\end{bmatrix}^T$ the vectors collecting the components
	of these linear combination we obtain the final expression of the discrete function to be minimized:
\[ \Psi(\fh, \fu, \fp) =  \begin{bmatrix} \fh & \fu \end{bmatrix}^T
                                \begin{bmatrix} G^h & -\alpha B \\
                                        -\alpha B^T & G^u \end{bmatrix}\begin{bmatrix} \fh \\ \fu \end{bmatrix}
						+\fp^T \left(A \fh  -C \fu -  \fq\right).  \]
The first order optimality conditions yield the following algebraic problem:
\begin{subequations}
\begin{align}
    G^h \fh - \alpha B \fu + A {\fp} &= \mathbf{0}, \label{eq:minJ_1} \\
    -\alpha B^T \fh + G^u \fu - C^T \fp &= \mathbf{0}, \label{eq:minJ_2} \\
	A \fh - C \fu &= \fq, \label{eq:mass_bal}
\end{align}
\label{eq:algebraic_model}
\end{subequations}

\noindent
where $\alpha$ is usually on the order of 1, $\fh\in\mathbb{R}^{n^h}$ is the discrete hydraulic head on fractures, 
$\fu\in\mathbb{R}^{n^u}$ is the discrete flux on the traces, and $\fp\in\mathbb{R}^{n^p}$ are the discrete
Lagrange multipliers. The vector $\fq\in\R^{n^h}$ includes the boundary conditions and the forcing terms.
Usually, $n^p=n^h$, while according to the problem $n^u$ can be either larger or smaller than $n^h$.
The matrices in \eqref{eq:algebraic_model} are as follows:
\begin{itemize}
	\item $G^h\in\mathbb{R}^{n^h\times n^h}$ and $G^u\in\mathbb{R}^{n^u\times n^u}$ are symmetric positive 
		semi-definite (SPSD), usually rank-deficient. The matrix $G^h$ is fracture-local, in the sense that it has 
		a block-diagonal structure with the block size depending on each fracture dimension, while $G^u$ has
		a global nature and operates on degrees of freedom related to different fractures;
	\item $B,C\in\mathbb{R}^{n^h\times n^u}$ are rectangular coupling blocks, whose entries are given by inner products 
		between the basis functions of $\mathcal{H}^h$ and $\mathcal{U}^h$. The matrix $C$ is fracture-local,
		with rectangular blocks whose size depends on the dimension of each fracture and the related traces,
		while $B=C+E$ has a global nature accounted for the contribution of matrix $E$ that has zero entries in the
		positions corresponding to the nonzero entries of the rectangular blocks of matrix $C$;
	\item $A\in\mathbb{R}^{n^h\times n^h}$ is symmetric positive definite (SPD) and fracture-local, i.e., with a 
		block diagonal structure. Each diagonal block arises from the discretization of the 
		$\nabla\cdot(\tensorTwo{\boldsymbol K}\nabla)$ operator over a fracture, hence inherits the usual structure 
		of a 2-D discrete Laplacian. 
\end{itemize}

Equations \ref{eq:algebraic_model} can be written in a compact form as:
\begin{equation}
    \left[ \begin{array}{ccc}
         G^h & -\alpha B & A \\
         -\alpha B^T & G^u & -C^T \\
         A & -C & 0
    \end{array} \right] \left[ \begin{array}{c}
         \fh \\
         \fu \\
         \fp
    \end{array} \right] = \left[ \begin{array}{c}
         \mathbf{0} \\
         \mathbf{0} \\
         \fq
    \end{array} \right]  \qquad \Longrightarrow \quad \K_0 \fx = \ff_0,
    \label{eq:sys}
\end{equation}
where $\K_0$ is a symmetric saddle-point matrix with a rank-deficient leading block. Solution to such problems arise 
in several applications and is the object of a significant number of works. For a review on methods and ideas, see 
for instance \cite{bgl05}. With an SPD leading block, as it often arises in Navier-Stokes equations, mixed finite 
element formulations of flow in porous media, poroelasticity, etc., an optimal preconditioner exists based on the 
approximation of the matrix Schur complement \cite{ESWbokk2005}. However, if the leading block is singular the 
problem is generally more difficult and the only available result is for the case of maximal rank deficiency \cite{EstGre15}.
					A potentially effective preconditioner for the system (\ref{eq:sys}) 
					has been recently proposed in \cite{GFBPS}
					where an appropriate permutation and inexact block factorization of $K_0$
					is obtained following the ideas developed in \cite{FFJCT} and \cite{FCF}. 
					The algorithm robustness, however, is problem-dependent and the overall solver may suffer from scalability issues.

\subsection{Algebraic solver and preconditioning strategy}
We develop here a preconditioning frame\-work exploiting the nice properties of matrix $A$, that is SPD, block diagonal, and 
such that its inverse can be applied exactly to a vector at a relatively low cost, and the polynomial acceleration. 
First, an appropriate permutation of $\K_0$ is used: 
\begin{equation}
        {\K} =\left [  \begin{array}{cc|c}
            A & 0 & -C \\
                G^h & A & -\alpha B \\[.1em]
		\cline{1-3}& \\[-.6em]
            -\alpha B^T & -C^T \quad & G^u
        \end{array} \right ],
        \qquad {\fx} = \begin{bmatrix}
         \fh \\
         \fp \\
         \fu
    \end{bmatrix}, \qquad {\ff} = \begin{bmatrix}
         \fq \\
         \mathbf{0} \\
         \mathbf{0}
    \end{bmatrix},
    \label{eq:tilde_var}
\end{equation}
so as to avoid a singular leading block. Though the permuted matrix is no longer symmetric, 
the $2 \times 2$ principal submatrix has a block diagonal structure and,  hence it is cheaply invertible.
In a more compact form, the permuted system $\K\fx=\ff$ can be written as
                        \begin{equation}
                                \label{block2}
\begin{bmatrix} M & -Z \\ -W^T & G^u \end{bmatrix}
\begin{bmatrix}\fx_1 \\ \fu \end{bmatrix} =
\begin{bmatrix}\ff_1 \\ \mathbf{0} \end{bmatrix}
                        \end{equation}
with
\[ M = \begin{bmatrix}   A & 0  \\ G^h & A \end{bmatrix}, \
   Z = \begin{bmatrix}   C \\ \alpha B \end{bmatrix}, \
   W = \begin{bmatrix}   \alpha B \\ C \end{bmatrix}, \
   \fx_1 = \begin{bmatrix}\fh \\ \fp \end{bmatrix}, \
	   \ff_1 = \begin{bmatrix}\fq \\ \mathbf{0} \end{bmatrix}. \]
Block Gaussian elimination reduces the system \eqref{block2} to:
\[ \begin{bmatrix} M & -Z \\ 0 &G^u - W^T M^{-1} Z \\ \end{bmatrix}
   \begin{bmatrix} \fx_1 \\ \fu \end{bmatrix} = 
   \begin{bmatrix} \ff_1 \\ W^T M^{-1}  \ff_1 \end{bmatrix} 
	   \qquad \text{with} \quad M^{-1} =
   \begin{bmatrix} A^{-1} & 0 \\ -A^{-1} G^h A^{-1} & A^{-1}\\ \end{bmatrix} \]
whose main computational burden is in the solution of
\begin{equation}
	\label{schur}
		S_u(\alpha) \fu = \fr, \qquad
                S_u(\alpha) = G^u - W^T M^{-1} Z, \quad \fr= W^T M^{-1}  \ff_1.
\end{equation}
Direct computation easily shows that matrix $S_u$ is symmetric:
\[ S_u(\alpha) = G^u - W^T M^{-1} Z = G^u  - \alpha B^T A^{-1} C - \alpha C^T A^{-1} B + C^T A^{-1} G_h A^{-1} C.\]
It is also positive definite under realistic conditions. In fact, matrix $S_u(\alpha)$ can always be made SPD by 
wisely selecting $\alpha > 0$ since $S_u(0)$ is SPD as the sum of the SPSD matrix $G_u$ and the SPD matrix 
$C^T A^{-1} G_h A^{-1} C$.
We assume that this assumption is verified and denote simply by $S_u$ the Schur complement in \eqref{schur}.
Therefore, the PCG solver can be employed.

Explicit computation of $S_u$ is not affordable for realistic problems, while the matrix-free application of $S_u$ 
to a vector can be implemented with no need of matrix-matrix multiplications. Before starting the PCG iteration, the
exact Cholesky factorization of $A$ is computed, i.e., the lower triangular matrix $L_A$ such that $A = L_A L_A^T $. 
Note that the Cholesky factor $L_A$ preserves the block diagonal structure of $A$ and each diagonal block arises from 
a 2-D discretization, hence this task is not overly expensive. Then, the application of $S_u$ to a vector $\fr$
can be implemented as described in Algorithm \ref{alg1},
whose arithmetic complexity amounts to 6 triangular solves plus 7 matrix-vector products involving block matrices
$B, C, G^h$ and $G^u$. Once system (\ref{schur}) is solved, the unknowns
$\fh$ and $\fp$ in (\ref{eq:tilde_var}) can be readily recovered by
\[ \fh = (L_A L_A^T)^{-1} \left(\fq + C \fu\right), \quad
\fp = (L_A L_A^T)^{-1} \left(B \fu - G^h \fh \right) .\]
The fact that the coefficient matrix $S_u$ is not explicitly available calls for a matrix-free preconditioner, namely the
Newton-Chebyshev polynomial preconditioner described in Section 3.

\begin{algorithm}[h!]
	\caption{Computation of $\fy = S_u \fr$}
        \label{alg1}
        \begin{algorithmic}[1]
		\State $\fv = C \fr$;
		\State $\fz = B \fr$;
		\State Solve $L_A\fu = \fv$;
		\State Solve $L_A^T\ft = \fu$;
		\State Solve $L_A\fu = \fz$;
		\State Solve $L_A^T\fw = \fu$;
		\State $\fz = G^u \fr - B^T \ft - C^T \fw$;
		\State $\fv = G^h \ft$;
		\State Solve $L_A\fu = \fv$;
		\State Solve $L_A^T\fw = \fu$;
		\State $\fy = \fz + C^T \fw$.
        \end{algorithmic}
\end{algorithm}

\subsection{Preconditioner implementation details}
		Following the discussion in Section 3,  we used as the \textit{seed} preconditioner the diagonal of $S^u$.
		Note that  $D_S = \text{diag}(S^u)$  can be computed without forming $S^u$ through the steps
		described in Algorithm \ref{algSu}, where with $\fz_i, \ft_i$ we denote the $i$-th column of
		matrices $Z$ and $T$, respectively.

		\smallskip

\begin{algorithm}[H]
	\caption{Computation of $D_S = \text{diag}(S^u)$}
	\label{algSu}
	\begin{algorithmic}[1]
	\State $Z  =  \left(L_A L_A^T\right)^{-1} C $
        \State $T  =  G^h Z - 2 B $
		\For {$i=1 : m$}
		\State ${(D_S)}_i  =   {(G^u)}_{ii} + \fz_i^T \ft_i$
		\EndFor
	\end{algorithmic}
\end{algorithm}

\noindent
The most time-consuming task in Algorithm \ref{algSu} is represented by the computation of $Z$ which requires
sparse matrix inversions. However, it must be observed
that these operations involve block matrices and hence do not produce a dramatic increase of the fill-in.

				 The polynomial preconditioner will be therefore applied to the symmetrically scaled system
					\[ \hat S^u  \hat{\fu} = \hat{\fr}, \qquad \text{with} \quad 
\hat S^u =\sqrt{D_S^{-1}} S^u  \sqrt{D_S^{-1}}, \ 
			\hat \fu=	\sqrt{D_S}\fu, \ 
			\hat \fr=	\sqrt{D_S^{-1}}\fr\]

\section{Parallel Implementation}

An efficient parallel implementation of the application of the Schur complement $S^u$ and
the explicit computation of its diagonal $D_S$ is fundamental for handling large-size problems
arising from realistic industrial applications.

The proposed algorithm is implemented relying on the Chronos software package,
a collection of linear algebra algorithms designed for high performance computers
\cite{CHRONOS-webpage}. Chronos is entirely written in C++ using the potential of
object-oriented programming (OOP) to easen its use from other software.
The Message Passing Interface (MPI) is used for communications among processes while
OpenMP directives enhance the fine-grained parallelism through multithreaded execution.
Chronos is free for research purposes and its license can be requested at the library
website~\cite{CHRONOS-webpage}.

The high level of abstraction introduced in Chronos by the OOP allows for the use of
the same distributed matrix object to store and use all the sparse matrices
composing the block system $\K$ in \cref{eq:tilde_var}.
In particular, Chronos adopts a Distributed Sparse Matrix (DSMat) storage scheme, 
where the matrix is sliced into $\texttt{nprocs}$ horizontal stripes of consecutive rows,
where $\texttt{nprocs}$ is the number of MPI ranks involved in the computation.
Each stripe is in turn subdivided into blocks stored in Compressed Sparse Row (CSR) format.
This block-nested storage scheme, along with nonblocking send/receive messages,
enhances the overlap between communications and computations hiding data-transfer latency
and reducing wall-time.

For the particular application of DFN, the stripes are chosen taking into account the
block-diagonal structure of the matrices $A$, $C$, and $G^h$. Each MPI rank stores a finite
number of consecutive blocks and no block is split between different ranks.
This subdivision then guides the partitioning of the other matrices $B$ and $G^u$.
A sketch of the DSMat storage scheme for the various blocks of the matrix $\mathcal{K}$ is shown in
Figure \ref{Fig:DSMat}.

\begin{figure}[h!]
\begin{center}
\caption{Chronos DSMat storage schemes for $A$, $C$ and $G^h$ (left) and $B$ and $G^u$ (center)
matrices partitioned into 4 MPI ranks. On the right, a corresponding distributed vector in
Chronos. The portions of matrices and vector stored by MPI rank 1 are highlighted with different
colors.}
\label{Fig:DSMat}
\includegraphics[width=12.0cm]{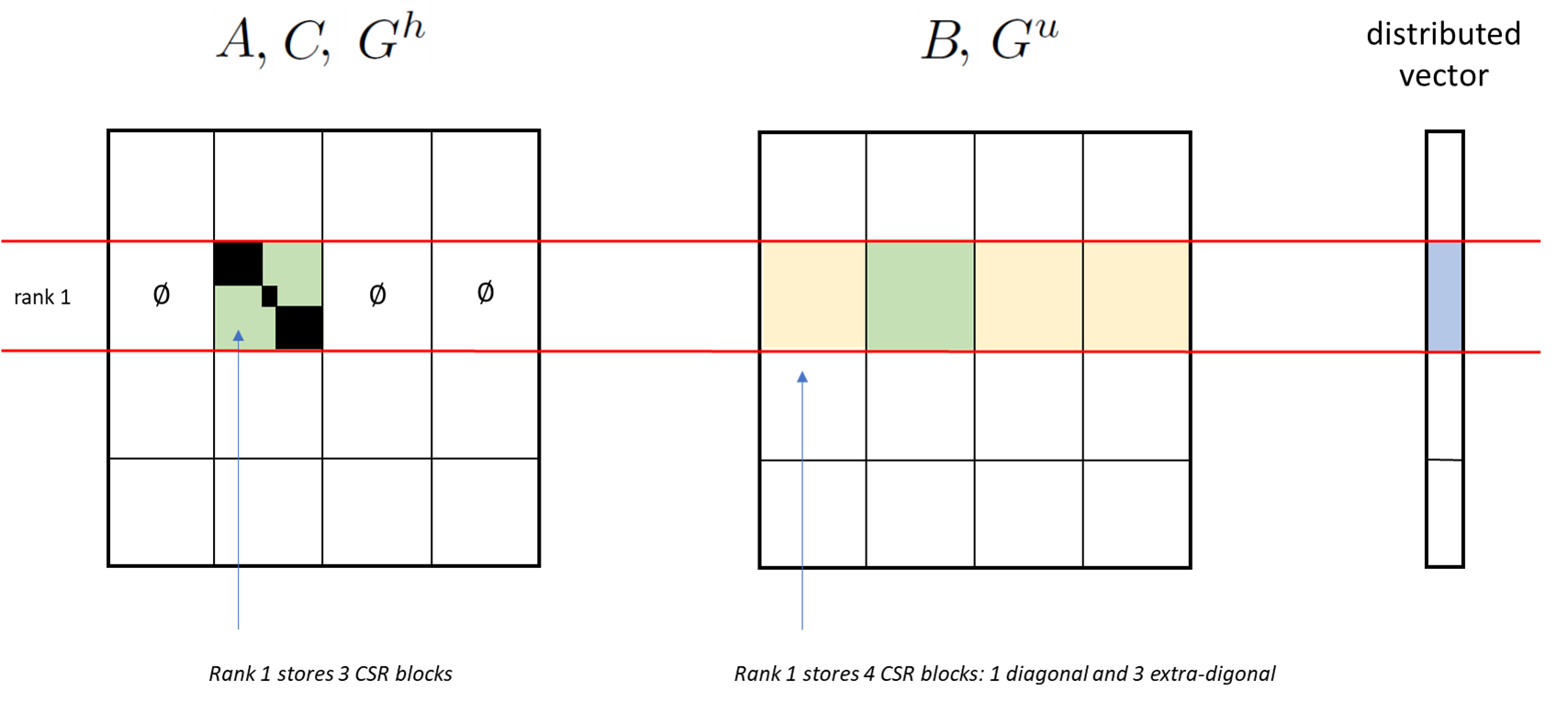}
\end{center}
\end{figure}

Both the multiplication by $S^u$ and the set-up of $D_S$ require the application of $A^{-1}$.
To this aim, the exact Cholesky factor of $A$ is computed by factorizing in parallel all
its diagonal blocks: since the number of blocks is very high, within each MPI rank,
several OpenMP threads are used to factor a chunk of blocks.
The sequential routine $cholmod\_factorize$, provided by the SuiteSparse library
\cite{SSPARSE-cholmod}, is used to factorize the single CSR blocks.

The $S^u$ application shown in Algorithm \ref{alg1} requires Sparse Matrix-by-Vector
product (SpMV) calls that are provided by Chronos. At its inner level, 
SpMV is specifically designed according to the type of matrix.
In particular, 10 SpMV products are executed with block-diagonal matrices, 6 of
which through forward and backward substitutions performed block-by-block using
$cholmod\_solve2$ from SuiteSparse. These products do not require any communication
between the MPI ranks, and on each rank the operations are executed by multiple OpenMP
threads. The remaining three SpMV products, involving $B$ and $G^u$, require preliminary
MPI data transfer: each stripe must receive the components of the distributed vector $\fr$
that correspond to the column indices of the extra diagonal CSR blocks.
To hide the latency, these communications are overlapped to the application of diagonal
CSR block with the portion of $r$ owned by the rank, highlighted respectively in green
and light blue in Figure \ref{Fig:DSMat}.

The computation of $D_S$ is performed in matrix-free setting following Algorithm \ref{algSu}.
Once again, the diagonal block structure allows for a highly parallel implementation that
does not require communications among MPI ranks.
In particular, each group of consecutive entries of $D_S$, corresponding to the rows
of a $C^T$ block, can be computed in parallel using  several OpenMP threads.

		\section{Numerical Results on the DFN problem}
		The relevant sizes and nonzeros of the test  matrices are reported in Table \ref{size}.
		\begin{table}[h!]
			\caption{Size and nonzeros of the relevant matrices for each test case.}
			\vspace{-2mm}
			\label{size}
			\begin{center}
				\begin{tabular}{rrrrrrrr}
			Test case		& $n^u$ & $n^p \equiv n^h$ & $nnz(\K)$ & $nnz(Z)$ &  $nnz(S^u)$ & \# fractures \\
			\hline
		\#1 & 56375  & 886693 & 13\,797084 & 301\,879683 & 62\,139981 & 395\\
		\#2	& $ 312518$ &  221144 & 10\,854803 &59\,966125& 325\,144680 &1425 \\
		\#3 (\texttt{Frac16}) &1\,428334 &502152 &31\,802122 & -- & -- & 15102 \\
		\#4 (\texttt{Frac32})&2\,777378 &994907 &44\,646710&--&--& 29370 \\
		\end{tabular}
			\end{center}
		\end{table}

		We notice that in the first case $n^u \ll n^h$ implying that the intermediate matrix $Z$ 
		has more nonzeros than the final Schur complement $S^u$, due to its large row size. For
		this problem it is more convenient  to form explicitly $S^u$ and work with the full Schur
		complement matrix.
In the other cases  computing the whole Schur complement is not worth due to its size and nonzero number,
so the computation of $\text{diag}(S^u)$ and the applications of $\hat S^u$ to a vector
are implemented as described in Algorithms \ref{alg1} and \ref{algSu}. The (very high) nonzero number
of $S^u$ for test case \#2 is reported only to reiterate that this matrix must not be formed explicitly.

\subsection{Results on test case \#1}
To roughly estimate the extremal eigenvalues we used the CG-based method called
Deflation-Accelerated Conjugate Gradient, DACG \cite{bgp97nlaa,BergamaschiPutti02} with
low accuracy, namely using a  tolerance on the relative residual
$\texttt{tol}_{\texttt{eig}} = 10^{-3}$. The DACG method is aimed at computing the leftmost
eigenpair of an SPD pencil $(A,B)$ but can be also employed to assess the (reciprocal of the)
largest eigenvalues of $A$ when the input matrices are $(I, A)$.
The DACG algorithm required 39 non preconditioned iterations for the smallest and 45 iterations
for the largest eigenvalue and 6.5 seconds overall.

\begin{table}[h!]
\caption{Iterations and CPU time to solve  $\hat{S} \hat{\fu} = \hat{\fr}$
         with the polynomial preconditioner for various degrees and $\xi = 10^{-3}$
        (left) and for different $\xi$-values with $m = 31$ and rank-one update (right).}
\label{tab11}
\begin{minipage}{11cm}
\begin{center}
\begin{tabular}{r|rrrr|rrrr}
$m$ & iter &   MVP      & ddot    & CPU 
& iter &   MVP      & ddot  & CPU  \\
\hline
0   & 1322 & 1322 & 3966 &105.59 &1235  & 1235 & 4900 &  99.21 \\
1   &  670 & 1340 & 2010 &100.95 & 625  &  1250 &2500 &  89.77 \\
3   &350& 1400    & 1050 &104.75 & 327  &1308   &1308 &  94.20\\
7   &177&1416     & 531  &105.66 & 166  &  1328&  664 &  95.02\\
15  &90& 1440     & 270  &108.09 &  85  & 1360   &340 &  97.62\\
31  &48 & 1536    & 144  &114.61 &  45  & 1440   &180 & 103.17\\
63  &28& 1792     & 84   &133.49 & 27  & 1728    &108 & 123.69\\
\hline & \multicolumn{4}{c|}{no update} & \multicolumn{4}{c}{rank-one update} \\
\end{tabular}
\end{center}
\end{minipage}
\hspace{2mm}
\begin{minipage}{4.0cm}
\begin{center}
\begin{tabular}{rr}
$\xi$            & iter \\ \hline
0                     & 63 \\
$10^{-4}$             & 51 \\
$10^{-3}$             & 45 \\
$3 \times 10^{-3}$    & 49 \\
$5 \times 10^{-3}$    & 53 \\
$10^{-2}$             & 61 \\
\\
\\
\end{tabular}
\end{center}
\end{minipage}
\end{table}

\begin{figure}[h!]
\vspace{-4mm}
\caption{PCG Convergence profiles for the DFN test case \#1 and different values of the 
         polynomial degree. Polynomial preconditioner with rank-one acceleration.}
\label{Fig:prof}
\centerline{\includegraphics[width=8cm]{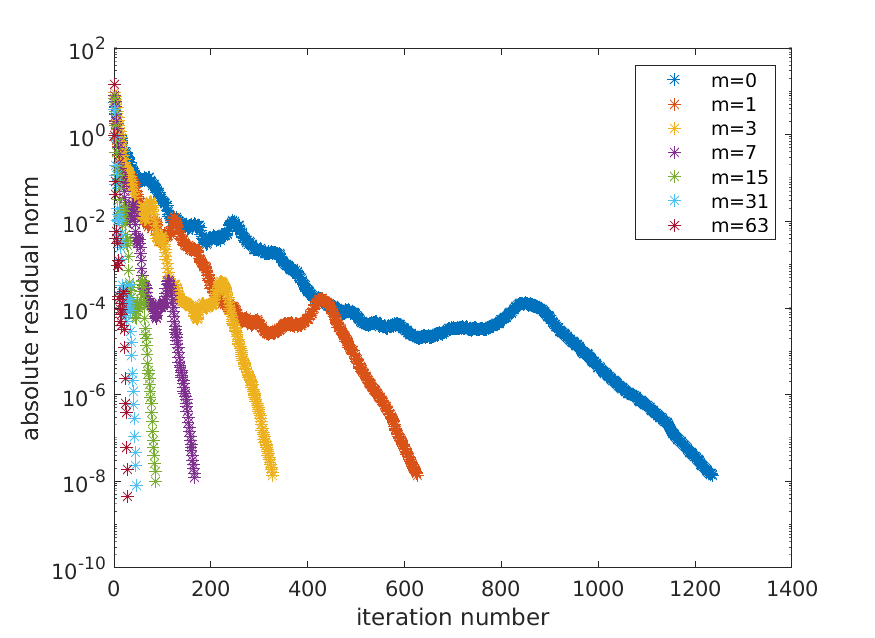}}
\end{figure}
The results in terms of number of iterations and CPU time are provided in Table \ref{tab11}
for increasing polynomial degree $m = 2^j-1, j = 0, \ldots, 6$. On the left
we show the results of the polynomial preconditioner alone, on the right with a
rank-one acceleration, namely using only the leftmost eigenpair, already computed for the
polynomial preconditioner setting.

The optimal scaling factor is found to be $\xi = 10^{-3}$ which is in accordance with the
theoretical findings as $\kappa(S^u) \approx 1.6 \times 10^4$.
The effect of the polynomial preconditioner is to drastically reduce the scalar products, by slightly
increasing the number of matrix-vector products. The low-rank correction, even using one vector only,
seems to be convenient, since the additional scalar product per iteration is compensated by a significant reduction
of the matrix vector products.
The convergence profile of the PCG solver with different polynomial preconditioners
is shown in Figure \ref{Fig:prof}, where the steepest profiles corresponding to
larger  degrees can be appreciated.

\subsection{Results on test case \#2} 
We use this test case to assess the parallel efficiency of our implementation of polynomial
preconditioning. We run the tests on the Marconi100 supercomputer which is installed at CINECA,
the Italian supercomputing center. Marconi100 consists of 980 computing nodes each one equipped
with 2 x 16 cores IBM Power9 AC922 processors at 2.6 GHz. For completeness, we add that each node
can also take advantage of 4 NVIDIA V100 GPU accelerators, but we do not use GPUs in these work.
The sparsity pattern of the whole $3 \times 3$ block matrix $\K$ is provided in Figure
\ref{Fig:spy}. Comparing this sparsity pattern with the block structure of $\K$ in equation
(\ref{eq:tilde_var}) we can observe that the nonzeros of the coupling matrices  $B$ and
$G^u$ are spread over the entire block while $A$, $C$ and $G^h$ display a block diagonal
structure. This is better shown in Figure \ref{spyA}, \ref{spyL} where a zoom of  matrix $A$ 
and its exact Cholesky factorization $L_A$ is provided.
\begin{figure}[h!]
\caption{Sparsity patterns of the whole matrix and subblocks.}
\begin{subfigure}[h]{0.32\textwidth}
\caption{Sparsity pattern of the $3 \times 3$ block matrix}
\label{Fig:spy}
\vspace{-1mm}
\includegraphics[width=4.6cm]{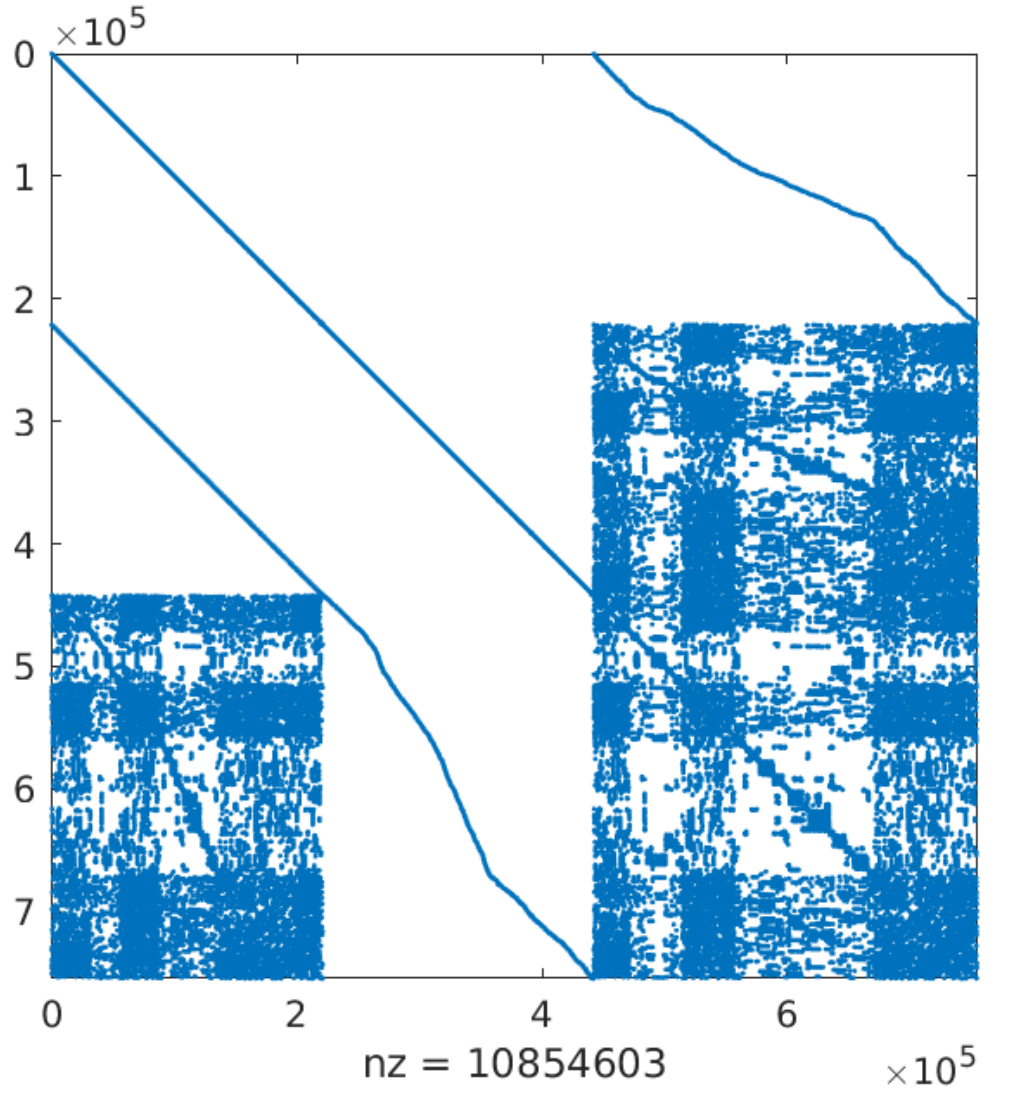}
\end{subfigure}
\hspace{-0.0cm}
\begin{subfigure}[h]{0.32\textwidth}
\caption{Zoom  of the leading submatrix of $A$ with size $n_0 = 1000$}
\label{spyA}
\includegraphics[width=5.0cm]{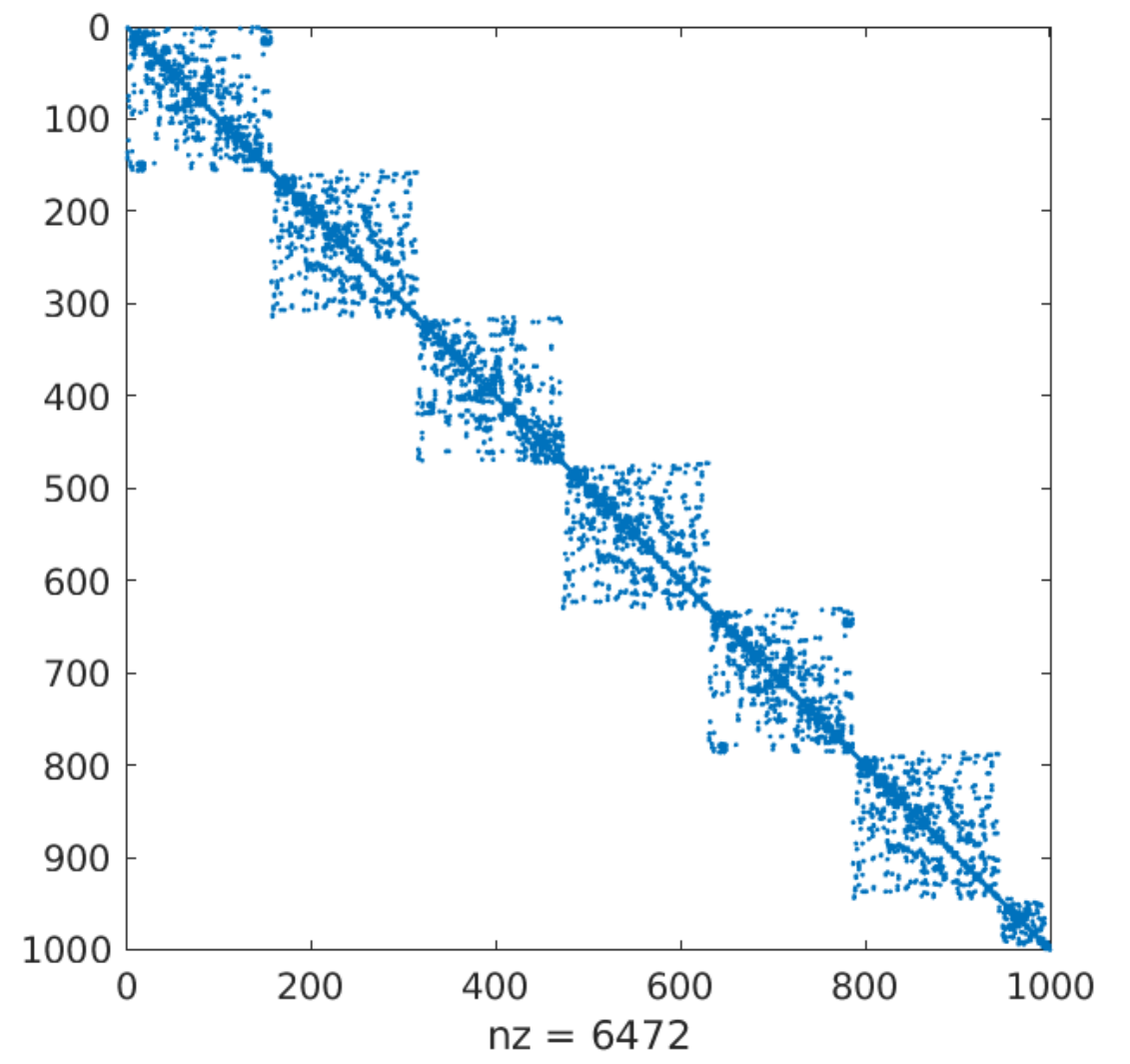}
\end{subfigure}
\hspace{-0.0cm}
\begin{subfigure}[h]{0.32\textwidth}
\caption{\small Zoom  of the leading submatrix of $L_A$ with size $n_0 = 1000$}
\vspace{2pt}
\label{spyL}
\includegraphics[width=5.0cm]{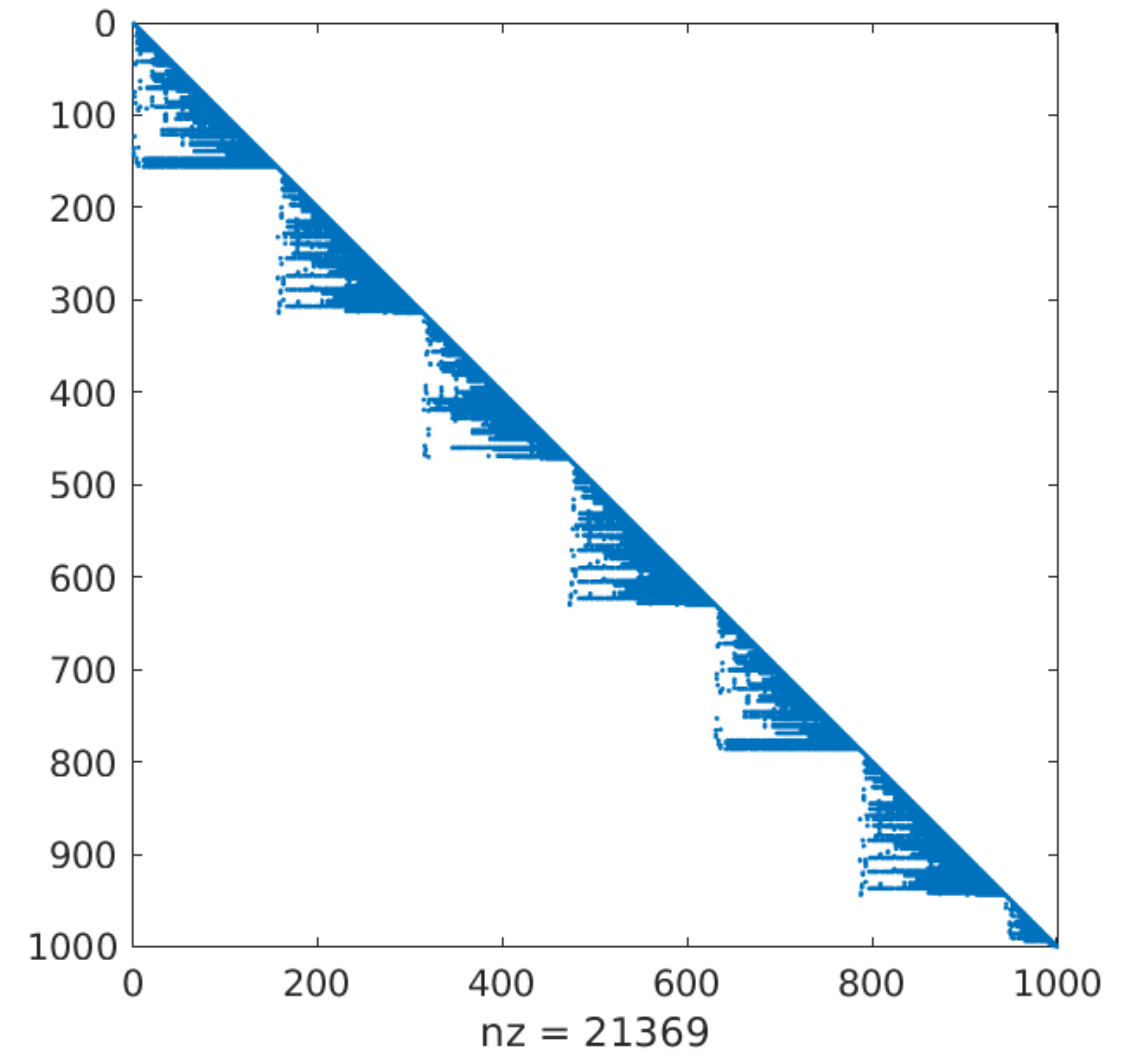}
\end{subfigure}
\end{figure}

Due to the large size of this problem, we solve it on 4 Marconi100 nodes and
involving all the available cores for a total of 128 cores. First, we experimentally
determine the optimal value of $\xi$ by varying it from 0.001 to 0.01 and keeping
fixed the polynomial degree to $m = 127$. Table \ref{tab:2xi} provides the number
of iterations to converge and solution time for PCG along with the minimum and maximum
eigenvalues of the diagonally scaled matrix that are needed to set-up the polynomial.

The choice of the polynomial degree has been made similarly by keeping $\xi = 0.007$ and varying
$m$, again on 128 cores of Marconi100. Table~\ref{tab:2m}, providing the number of
iterations to converge and solution time for PCG, shows that the number of iterations
always decreases with the degree of the polynomial, as expected, while the time to solution
initially decreases but reaches a minimum for $m = 127$.

\begin{table}[h!]
\caption{Number of iterations to converge and solution time for PCG preconditioned
with a polynomial of degree $m = 127$ and 128 Marconi100 cores by varying $\xi$
from 0.001 to 0.01. The minimum and maximum eigenvalues of the diagonally scaled matrix
are $1.56 \times 10^{-5}$ and $2.06$, respectively.}
\begin{center}
\begin{tabular}{rrr}
\hline
$\xi$ & PCG iters & Solv. time [s] \\
\hline
0.001 & 113 & 60.393 \\
0.002 & 107 & 56.562 \\
0.003 & 94 & 50.182 \\
0.004 & 83 & 44.074 \\
0.005 & 108 & 57.266 \\
0.006 & 97 & 51.527 \\
0.007 & 76 & 40.509 \\
0.008 & 78 & 41.758 \\
0.009 & 80 & 42.779 \\
0.010 & 83 & 43.959 \\
\hline
\end{tabular}
\end{center}
\label{tab:2xi}
\end{table}

\begin{table}[h!]
\caption{Number of iterations to converge and solution time for PCG preconditioned
with a polynomials of varying degrees and 128 Marconi100 cores for $\xi=0.007$.}
\begin{center}
\begin{tabular}{rrr}
\hline
$m$ & PCG iters & Solv. time [s] \\
\hline
3  &  2940  &  48.633 \\
7  &  1509  &  50.024 \\
15  &  670  &  44.493 \\
31  &  378  &  49.962 \\
63  &  195  &  51.636 \\
127  &  76  &  40.509 \\
255  &  46  &  49.445 \\
\hline
\end{tabular}
\end{center}
\label{tab:2m}
\end{table}

Finally, we provide a strong scalability test to demonstrate how polynomial preconditioning
is amenable to parallelization. Using the optimal values of $\xi$ and $m$ found above, that is
$0.007$ and $127$, respectively, we solve the test case \#2 by using 4 Marconi100 nodes and
a number of cores per node varying from 1 up to the maximum possible, 32. 
\begin{table}[h!]
\caption{Number of iterations to converge, solution time and parallel efficiency for
PCG preconditioned with a polynomials with a varying number of cores.}
\begin{center}
\begin{tabular}{rrrr}
\hline
\# of cores & PCG iters & Solv. time [s] & $\eta$ [\%] \\
\hline
4   &  76  &  552.0  &  100.00 \\
8   &  76  &  304.0  &   90.80 \\
16  &  76  &  175.3  &   78.75 \\
32  &  76  &  108.0  &   63.91 \\
64  &  76  &   63.8  &   54.05 \\
128 &  76  &   46.8  &   36.84 \\
\hline
\end{tabular}
\end{center}
\label{tab:eff}
\end{table}

From
Table~\ref{tab:eff}, it is possible to note how the number of PCG iterates remains constant,
as expected, while the solution times decreases with the increase of the number of cores.
To better understand how effective polynomial preconditioning is in parallel, we also report
the parallel efficiency which is defined as the ratio between real and ideal speed-up:
\begin{equation}
	\eta(\texttt{nprocs}) = \frac{\texttt{nprocs}}{4} \frac{T_{\texttt{nprocs}}}{T_4}
\end{equation}
where $\texttt{nprocs}$ denotes the number of cores used in the run and $T_{\texttt{nprocs}}$ the corresponding
execution time. Note that, although with 128 cores, the number of unknowns binded to each
core is only 2,441, we still have a reasonable efficiency which is very unlikely 
to reach with more complex preconditioning as approximate inverses, ILU or AMG.



\subsection{Results on the largest test cases}
This section presents the numerical results on the two largest test cases with a number of fractures of about 16,000 and 32,000, named \texttt{Frac16} and \texttt{Frac32}, respectively.
%
As done for the other test cases, we first determine the optimal value of $\xi$ by varying it from $10^{-4}$ to $5 \times 10^{-3}$ with a fixed polynomial degree $m$ = 127.
Table ~\ref{tab:Frac_epsilon} provides the number of iterations for the convergence of the PCG: the optimal value found is $10^{-4}$, but there are no significant differences in the range of $10^{-4} - 10^{-3}$. Moreover, the trend appears to be similar as the number of fractures increases.

\begin{table}[h!]
	\caption{Number of iterations for the convergence of the PCG preconditioned with a polynomial of degree $m$ = 127 by varying $\xi$ from $10^{-4}$ to $5 \times 10^{-3}$.}
\begin{center}
\begin{tabular}{rrrr|rrrr}
\hline
	Test case & $m$ & \multicolumn{1}{c}{$\xi$} & PCG iters &Test case & $m$ & \multicolumn{1}{c}{$\xi$} & PCG iters \\
\hline
&  127  &  $5 \times 10^{-3}$   & 132  & 
	    &  127  &  $5 \times 10^{-3}$  & 156 \\
	    \texttt{Frac16}  &  127  &  $1 \times 10^{-3}$   & 104  & 
	\texttt{Frac32}  &  127  &  $1 \times 10^{-3}$   & 121 \\
	&  127  &  $3 \times 10^{-4}$   & 105  & 
	    &  127  &  $3 \times 10^{-4}$   & 112 \\
	    &  127  &  $1 \times 10^{-4}$   & 103  & 
	    &  127  &  $1 \times 10^{-4}$   & 107 \\
 \hline
\end{tabular}
\end{center}
\label{tab:Frac_epsilon}
\end{table}

%
%
%
Regarding the parallel implementation, the two cases \texttt{Frac16} and \texttt{Frac32} were solved with degree $m$ = 127 and $\xi$ = 0.001 by increasing the number of cores up to 32. The results are provided in Table ~\ref{tab:Frac_cores} and show excellent strong scalability, with an efficiency of about 70\% with 32 cores where the number of unknowns binded to each core is only 15,000 and 30,000 for \texttt{Frac16} and \texttt{Frac32}, respectively.

\begin{table}[h!]
\caption{Number of iterations for the convergence, solution time and parallel efficiency of the PCG preconditioned
with polynomials of degree $m$ = 127 with a varying number of cores.}
\begin{center}
\begin{tabular}{rrrrr}
\hline
Test case & \# of cores & PCG iters & Solv. time [s] & $\eta$[\%] \\
\hline
         &  2  &  105 & 1678.6 & 100.0 \\
         &  4  &  104 &  866.6 &  96.8 \\
	\texttt{Frac16} &  8  &  104 &  459.6 &  91.3 \\
         & 16  &  103 &  249.7 &  84.0 \\
         & 32  &  103 &  157.5 &  66.7 \\
\hline
         &  4  &  107 & 1750.2 &  100.0 \\
	\texttt{Frac32} &  8  &  107 &  924.4 &   94.7 \\
         & 16  &  106 &  501.6 &   87.2 \\
         & 32  &  108 &  300.5 &   72.8 \\
\hline
\end{tabular}
\end{center}
\label{tab:Frac_cores}
\end{table}

\section{Conclusions}
A high-degree polynomial preconditioner has been developed with the aim of reducing the number of scalar products in the
Conjugate Gradient iteration. We have shown that the suitable choice of a scaling parameter can speed-up the PCG convergence
by avoiding clustering of eigenvalues around the endpoints of the spectral interval. We have given theoretical criteria to 
select an appropriate value for this parameter. The proposed preconditioning approach reveals particularly
useful when the coefficient matrix is not explicitly available, as in the case of the Schur complement matrix
obtained in the solution of a 3$ \times$ 3 block linear system arising in fluid flow simulations on fractured network models.
This preconditioner is well suited to parallelization since it reduces considerably the number of scalar product, 
thus minimizing the collective global communications among processors. 
Results on the Marconi100 supercomputer show satisfactory 
scalability results on realistic Discrete Fracture Networks test cases with thousands of fractures.

\bibliographystyle{siam}
				\bibliography{dfn}
\end{document}